\documentclass[12pt]{article}

\pdfoutput=1
\usepackage[margin=26mm]{geometry}
\usepackage{amsthm,amsmath,mathbbol,mathtools,amsfonts,amssymb,graphicx}
\usepackage{hyperref}
\usepackage{algorithm}
\usepackage{url}
\usepackage{epsfig}
\usepackage{wrapfig}
\usepackage{color}
\usepackage{blkarray}
 \usepackage[textsize=scriptsize]{todonotes}
\usepackage{lmodern}
\usepackage{bigstrut}
\usepackage{booktabs}
\usepackage{arydshln}
\usepackage{subfig}
\usepackage{algorithm}
\usepackage{algpseudocode}
\usepackage{comment}
\usepackage{pgf,tikz}
\usepackage[all]{xy}
\usetikzlibrary{matrix, positioning}
\usetikzlibrary{patterns}
\usepackage{tikz-cd}
\tikzset{
  big hook/.style = {
    hook',
    shorten <=2pt,
    line width=0.9pt
  }
}
\usepackage{enumitem}

\usetikzlibrary{arrows}
\usepackage{pbox}
\usepackage[capitalize]{cleveref}
\usepackage{xspace}
\usepackage{xstring}
\usepackage{multicol}

\usepackage{csquotes}

\newtheorem{theorem}{Theorem}
\newtheorem{lemma}[theorem]{Lemma}
\newtheorem{example}[theorem]{Example}
\newtheorem{proposition}[theorem]{Proposition}

\newtheorem{conjecture}[theorem]{Conjecture}
\newtheorem{definition}[theorem]{Definition}

\newtheorem*{corollary*}{Corollary}

\newcommand{\R}{\mathbb{R}}

\newcommand{\Pb}{\mathbb{P}}

\DeclareMathOperator{\Ker}{Ker}

\DeclareMathOperator{\Pru}{Pru}

\tikzset{
  big hook/.style = {hook, line width=1pt}
}

\numberwithin{theorem}{section}

\makeatletter
\renewcommand*\env@matrix[1][*\c@MaxMatrixCols c]{%
	\hskip -\arraycolsep
	\let\@ifnextchar\new@ifnextchar
	\array{#1}}
\makeatother

\title{Pruning distance of upset-decomposable persistence modules}
\date{}
\author{Roy Nicolas Nehme\thanks{LAGA, Université Sorbonne Paris Nord, CNRS, Université Paris 8\newline
Email: nehme@math.univ-paris13.fr}}

\begin{document}

\maketitle

 \begin{abstract}
 The pruning distance recently introduced by Bjerkevik compares persistence modules using approximate decompositions called prunings. Bjerkevik conjectures that this distance is Lipschitz equivalent to the classical interleaving distance on modules of a bounded pointwise dimension. In this article, we establish a Lipschitz equivalence with respect to the bottleneck distance for upset-decomposable persistence modules. In particular, this proves half of Bjerkevik's conjecture for these modules. More precisely, we bound the bottleneck distance by a multiple of the pruning distance, improving the conjectured bound from~$2r$ to~$(2r-1)$ where~$r$ is the maximal pointwise dimension, and show that this improved bound is sharp. We also prove the converse inequality, bounding the pruning distance by the bottleneck distance. Our approach relies on explicitly computing the pruning of upset-decomposable modules, which we carry out using a directed graph formalism.

\end{abstract}

\tableofcontents
\markboth{Table of Contents}{} 
\clearpage

\section{Introduction} 
\markboth{1. Introduction}{} 

Developed initially for data analysis, topological persistence provides invariants of filtered topological spaces through computation of their homology. This method produces algebraic objects called \emph{persistence modules}, which are formally functors from the ordered set~$(\R,\leq)$ to the category of vector spaces. A central result of the theory ensures that these modules decompose into simple elements associated with intervals when their vector spaces are finite-dimensional. The collection of intervals appearing in such a decomposition, the \emph{persistence barcode}, completely describes the original persistence module up to isomorphism and, therefore, contains fine information about the initial topological filtration.

The remarkable isometry theorem \cite{chazal_isometry} guarantees that the algebraically defined \emph{interleaving distance}~$d_I$ between modules coincides with the combinatorial \emph{bottleneck distance}~$d_B$ between their barcodes. The latter is defined through persistence diagrams and can be formulated as a minimum-cost bottleneck matching problem on a complete bipartite graph \cite{Edelsbrunner_Harer}, which can be solved in polynomial time \cite{Kerber_Morozov}. In particular, this theorem shows that the interleaving distance, while stable with respect to distance on data by construction, is also computable in polynomial time.

When dealing with multi-parameter data, it is natural to consider filtrations depending on several parameters instead of a single one, thus creating a \emph{multi-parameter persistence module}, that is, persistence modules indexed by~$\R^n$ endowed with the product order. However, although the interleaving distance and the bottleneck distance are both generalized to the multiparameter setting, the isometry theorem no longer holds: multiparameter modules are not described in a stable way by their indecomposables. So, the bottleneck distance fails to satisfy the stability property in the multi-parameter case, making its usage useless. Henceforth, the computational advantage of the interleaving distance disappears in the multi-parameter setting, in particular, computing the interleaving distance for multi-parameter persistence modules has been proven to be NP-hard \cite{bjerkevik_Botnan_kerber}. 

Bjerkevik introduced the \emph{$\varepsilon$-pruning} (Definition~\ref{def_pruning}) of a module in \cite{bjerkevik_stabilizing}. The~$\varepsilon$-pruning of~$M$ is an approximation of~$M$ which is particularly decomposable while discarding features below scale~$\varepsilon$. This notion of pruning allowed Bjerkevik to introduce the pruning distance~$d_P$ (Definition~\ref{def_pruning_distance}) which is related to both the interleaving distance and the decompositions. Therefore, it is unclear whether its computation is NP-hard or polynomial time computable. Bjerkevik conjectured that computing prunings can be done in polynomial time, and observes that, in the examples used to establish NP-hardness of computing the interleaving distance in \cite{bjerkevik_botnan}, the pruning distance appears to be computable in polynomial time, suggesting the possibility of a general polynomial time algorithm.

Importantly, Bjerkevik conjectured that the pruning distance is stable, and even Lipschitz equivalent to the interleaving distance, on persistence modules of a bounded pointwise dimension. More precisely, if~$M$ and~$N$ are persistence modules on an arbitrary poset with pointwise dimension smaller than a positive integer~$r$,
\[
d_P(M,N) \leq d_I(M,N) \leq 2r\,d_P(M,N).
\]

In this paper, we prove that the pruning distance is Lipschitz equivalent to the bottleneck distance for the class of upset-decomposable persistence modules, that is, modules that decompose as direct sums of interval persistence modules supported on subsets~$U \subset \R^n$ which are upward closed under the product order. Upset-decomposable modules are an important family of multi-parameter persistence modules that we study in detail in Section~\ref{sec:pruning-upset}. Our main results, Theorem~\ref{theorem_main} and Theorem~\ref{theorem_3}, show the following
\begin{theorem}
If~$M$ and~$N$ are two upset-decomposable modules of pointwise dimension at most~$r$, then
\[
d_P(M,N) \leq d_B(M,N) \leq (2r-1)d_P(M,N).
\]
\end{theorem}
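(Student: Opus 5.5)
The plan is to reduce both inequalities to an explicit description of the $\varepsilon$-pruning of an upset-decomposable module. We write $M\cong\bigoplus_{i\in I}\Bbbk_{U_i}$ with $U_i\subset\R^n$ upsets. First we compute $\Pru_\varepsilon(M)$ from Definition~\ref{def_pruning}. Since every nonzero morphism $\Bbbk_{U}\to\Bbbk_{V}$ between upset modules exists only when $V\subseteq U$, and is then a scalar multiple of the canonical one, all structure maps and morphisms are governed by inclusions among the shifted upsets $U_i^{\pm\varepsilon}$. We encode these inclusions in a directed graph $G_\varepsilon(M)$. Its vertices are the summands $U_i$, and it has an edge $U_i\to U_j$ whenever $U_j\subseteq U_i+\varepsilon\mathbf 1$, up to the precise shift dictated by the definition.

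The first key step is a structure theorem: $\Pru_\varepsilon(M)$ is again upset-decomposable. Its summands are obtained from the $U_i$ by grouping vertices according to the strongly connected or "reachable" classes of $G_\varepsilon(M)$, and by replacing each $U_i$ with an explicitly determined upset $U_i^{(\varepsilon)}$ lying between $U_i$ and its $\varepsilon$-thickening. We would prove this by constructing the relevant maps summand by summand and checking the universal property (or the defining sub/quotient description) of the pruning. The point is that within each class one can perform a triangular change of basis that splits the module.

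For the first inequality $d_P\le d_B$, take an $\varepsilon$-matching between the barcodes $\{U_i\}$ and $\{V_j\}$. Each matched pair satisfies $U_i\subseteq V_j+\varepsilon\mathbf 1$ and $V_j\subseteq U_i+\varepsilon\mathbf 1$, and each unmatched upset is $\varepsilon$-trivial. Our plan is to show that the matching induces a graph isomorphism between the relevant parts of $G_\varepsilon(M)$ and $G_\varepsilon(N)$, under which the pruned upsets satisfy the condition required in Definition~\ref{def_pruning_distance}. Unmatched summands are killed by pruning. This gives $d_P(M,N)\le\varepsilon$. This direction should be routine once the structure theorem is in place.

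For the second inequality $d_B\le(2r-1)d_P$, suppose $M$ and $N$ are $\varepsilon$-close in the pruning sense. We use the explicit decomposition to match summands of $M$ and $N$ through the pruned summands. A summand $U_i$ is related to a pruned summand, which corresponds to a pruned summand of $N$, which is in turn related to some $V_j$. The error accumulates along directed paths in $G_\varepsilon$. Because the pointwise dimension is at most $r$, any chain of pairwise non-comparable-within-$\varepsilon$ upsets that are all "glued" together has at most $r$ members: they all contain a common point beyond which all $r$ of them are nonzero. Such a path therefore has at most $r-1$ edges on each side, and the careful count yields a total shift of $(2r-1)\varepsilon$ rather than $2r\varepsilon$. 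A Hall-type argument, made possible by the class structure, then produces an actual bijective $(2r-1)\varepsilon$-matching.

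We expect the main obstacle to be this last count. The first thing to try is to show that the extremal element of each class (the largest upset) is fixed by pruning, which saves one $\varepsilon$. For sharpness we would exhibit $r$ nested upsets, each an $\varepsilon$-shift of the next, compared with a suitably shifted family; this realizes equality.
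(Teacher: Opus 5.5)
Your overall architecture (compute the pruning through a reachability graph, then compare) matches the paper's. However, the structural facts you plan to build on are wrong, and the decisive step of the upper bound is missing.

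\textbf{Orientation and the pruning formula.} Your orientation is reversed. A nonzero morphism from the upset module on $U$ to the one on $V$ exists iff $U\subseteq V$ (Lemma~\ref{lem_upset_map2}), not $V\subseteq U$. So the graph should have an edge $M_i\to M_j$ iff $M_i\subseteq M_j(2\alpha)$, i.e.\ $U_i+2\alpha\mathbf 1\subseteq U_j$. Recall that $U+t\mathbf 1\subseteq U$ for $t\ge0$; as written, your matching condition would force $U_i=U_i+2\varepsilon\mathbf 1$, hence $U_i\in\{\emptyset,\R^n\}$. With the correct orientation the pruning is elementary and involves no strongly connected classes or change of basis. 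All structure maps are injective, so $K=0$, and $I$ splits along the given decomposition. This gives $\pi_l(\Pru_\alpha(M))=\bigcap_{s\in JM_l}M_s(-\alpha)$ (Theorem~\ref{theorem_1}).

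In particular, the pruned summand is \emph{not} within $\alpha$ of $U_l$. For $U_i=\{x\ge 2i\alpha\mathbf 1\}$, $i=1,\dots,r$, every index is reachable from $1$, and $\pi_1(\Pru_\alpha(M))$ is supported on $U_1+(2r-1)\alpha\mathbf 1$. So nothing is ``fixed'' by pruning: every summand moves by at least $\alpha$. The gain over $2r$ comes instead from this: a shortest path among at most $r$ vertices has at most $r-1$ edges, each costing $2\alpha$, plus a single $\alpha$.

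\textbf{The inequality $d_P\le d_B$.} Your mechanism here does not work as stated. $G(M,\varepsilon)$ and $G(N,\varepsilon)$ are not isomorphic in general, since an arbitrarily small perturbation can delete an edge. There are also no unmatched summands to be killed: nonzero upset modules are never $\varepsilon$-trivial, and pruning never deletes a summand. Finally, you must treat every $\delta\ge0$. What works is to relabel so that $M_l\subseteq N_l(\alpha)$ and $N_l\subseteq M_l(\alpha)$. Then each edge $i\to j$ of $G(N,\delta)$ is an edge of $G(M,\alpha+\delta)$, because $M_i\subseteq N_i(\alpha)\subseteq N_j(\alpha+2\delta)\subseteq M_j(2\alpha+2\delta)$. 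Hence $JN_l\subseteq JM_l$, and the intersection formula gives $\pi_l(\Pru_{\alpha+\delta}(M))\subseteq\pi_l(\Pru_\delta(N))$.

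\textbf{The genuine gap: $d_B\le(2r-1)d_P$.} The two $\infty$-refinements give two unrelated permutations. After relabelling, $\pi_l(\Pru_\alpha(N))\subseteq M_l$ and $\pi_l(\Pru_\alpha(M))\subseteq N_{\sigma(l)}$. The first gives $N_l\subseteq M_l((2r-1)\alpha)$ at once. The reverse inclusion for the \emph{same} pairing is the whole difficulty, and your ``Hall-type argument'' does not address it. Perfect matchings in the two one-sided bipartite graphs do not yield one in their intersection, and you give no reason why Hall's condition should hold.

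Bounding paths by ``$r-1$ edges on each side'' does not suffice either. To get from $M_{l_1}$ to $N_{l_1}$ you must follow the cycle of $\sigma$ through $l_1$, which can have length up to $r$. Chaining the one-sided inclusions costs about $(2r-1)\alpha$ per half-step, so of order $r^2\alpha$ overall.

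The missing idea is to charge the shift to the number of \emph{distinct indices collected}, not to the number of steps. Start from $J_1=JM_{l_1}$ and alternately take reachable closures $J'_n=JN_{\sigma(J_n)}$ in $G(N,\alpha)$ and $J_{n+1}=JM_{J_n\cup J'_n}$ in $G(M,\alpha)$. Use Lemma~\ref{lem_graph_1}: a set $A$ reaches all of its closure $B$ within $|B|-|A|$ steps. Also use that $l_1\in J_n\setminus J'_n$ forces $|J_n\cup J'_n|\ge|J'_n|+1$; this pays for the $\alpha$ lost each time you pass back through $\Pru_\alpha(N)$. This bookkeeping maintains $M_{l_1}(-(2|J'_n|-1)\alpha)\subseteq N_l$ for all $l\in J'_n$. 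Since $\sigma^n(l_1)\in J'_n$, eventually $l_1\in J'_n$, giving $M_{l_1}\subseteq N_{l_1}((2r-1)\alpha)$. Without it, your sketch neither produces a matching nor reaches the constant $2r-1$.
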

This result improves the right-hand inequality in Bjerkevik’s conjecture by reducing the constant from~$2r$ to~$2r-1$, and strengthens it by replacing the interleaving distance with the bottleneck distance, which dominates the interleaving distance. Moreover, in Example~\ref{ex_2} we show that the constant~$2r-1$ is optimal on the class of upset-decomposable persistence modules, for  both the interleaving distance and the bottleneck distance. The left-hand inequality follows from our analysis and is weaker than the conjectured inequality involving the interleaving distance.

Our approach is based on an explicit computation of the pruning of upset-decomposable persistence modules, carried out in Theorem~\ref{theorem_1}. To compute the pruning, we associate a directed graph to an upset-decomposable module, thereby translate the problem into computing the reachable indices via the directed paths in the graph, which yields a purely combinatorial procedure.

\section{Preliminaries }
\markboth{2. Preliminaries}{}

\textbf{Remark.} 
All definitions and results presented in this chapter are taken from Bjerkevik's~\cite{bjerkevik_stabilizing}, and no proofs will be provided here as they can be found in that reference. We include them as prerequisites since the purpose of this paper is to prove an inequality of the Conjecture~\ref{conjecture} introduced in the same paper, in the case of upset-decomposable persistence modules. No part of this chapter constitutes original work, and readers familiar with~\cite{bjerkevik_stabilizing} may safely skip this chapter until Conjecture~\ref{conjecture}.

\setcounter{theorem}{0} 
\renewcommand{\thetheorem}{2.\arabic{theorem}}

\subsection{Persistence modules}

Let~$\Pb$ be a poset, which we identify with its poset category.
Our result will be in the poset~$\R^d,$ but since the pruning and the pruning distance were introduced by Bjerkevik in \cite{bjerkevik_stabilizing} in the general poset, we will give in this section the main definitions to get the general definition of the conjecture.

In the rest of this work we will assume that~$\mathbb{P}$ is equipped with a \emph{shift function}:
\[
\mathbb{P}_\varepsilon : \mathbb{P} \to \mathbb{P} \quad \text{for each } \varepsilon \in \mathbb{R},
\]
that is:

\begin{itemize}
    \item~$\mathbb{P}_0$ is the identity;
    \item For~$p \leq q \in \mathbb{P}$, we have~$\mathbb{P}_\varepsilon(p) \leq \mathbb{P}_\varepsilon(q)$;
    \item For all~$p \in \mathbb{P}$ and~$\varepsilon \geq 0$,~$\mathbb{P}_\varepsilon(p) \geq p$;
    \item For all~$\varepsilon, \delta \in \mathbb{R}$,~$\mathbb{P}_{\varepsilon + \delta} = \mathbb{P}_\varepsilon \circ \mathbb{P}_\delta$.
\end{itemize}

It follows from the first and fourth properties that all the functions~$\mathbb{P}_\varepsilon$ are bijections.  
In the rest of this paper, we will assume that~$\mathbb{P}$ satisfies the properties above.

Let~$\mathbf{Vec}$ denote the category of vector spaces over some fixed field~$k$.\\

\begin{definition}[]\label{def_persistence_module}
A \emph{persistence module} is a functor: \[M : \mathbb{P} \longrightarrow \mathbf{Vec},
\]

A \emph{morphism of persistence modules}~$f : M \to N$ is a natural transformation between the corresponding functors.
\end{definition}

Throughout this paper, the term module refers to a persistence module.

When the poset~$\mathbb{P}$ is taken to be~$\mathbb{R}^d$, with the shift function~$\mathbb{P}_\varepsilon(p) = p + (\varepsilon, \ldots, \varepsilon)$, we refer to these as \textit{$d$-parameter modules}. In particular, we distinguish between the case~$d = 1$, referred to as \textit{one-parameter modules}, and the case~$d > 1$, referred to as \textit{multi-parameter modules}.

Given a module~$M$, we write~$M_p$ the vector space associated to on an object~$p \in \mathbb{P}$, and~$M_{p \to q}$ for the morphism induced by~$p \leq q$ in the poset.

We say that a module~$M$ is \emph{pointwise finite-dimensional} (abbreviated as \textit{pfd}) if~$M_p$ is finite-dimensional for all~$p \in \mathbb{P}$.

Given two modules~$M$ and~$N$, the \emph{direct sum}~$M \oplus N$ is the module defined by
\[
(M \oplus N)_p := M_p \oplus N_p,
\]
with structure maps given component wise in the obvious way. A module~$M$ is called \emph{indecomposable} if, for every decomposition
\[
M = M_1 \oplus M_2,
\]
one of the summands~$M_1$ or~$M_2$ is the zero module.

The following theorem shows us that pointwise finite-dimensional modules have an essentially unique way of decomposing into, and allows us to introduce the subsequent definition.

\begin{theorem}[{\cite[Theorem 1.1]{botnan_crawley-boevey}, \cite[Theorem 1(ii)]{azumaya1950corrections}}]\label{th_barcode}
Let~$M$ be a pointwise finite-dimensional (pfd) module. Then there exists a set~$\{ M_i \}_{i \in I}$ of nonzero indecomposable modules with local endomorphism rings such that
\[
M \cong \bigoplus_{i \in I} M_i.
\]
Moreover, if~$M \cong \bigoplus_{j \in J} M'_j$, where each~$M'_j$ is nonzero and indecomposable, then there exists a bijection~$\sigma : I \to J$ such that~$M_i \cong M'_{\sigma(i)}$ for all~$i \in I$.
\end{theorem}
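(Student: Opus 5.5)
The plan splits the statement into existence, which needs some work, and uniqueness, which is a Krull--Remak--Schmidt--Azumaya argument.

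\textbf{Existence.} The key step is to show that every pfd module $M$ has a nonzero indecomposable summand with local endomorphism ring. I would obtain it as follows.
\begin{enumerate}
\item Fix $p$ with $M_p\neq 0$.
\item For each finite subset $S\subset \mathbb{P}$ containing $p$, look at the restriction of $M$ to $S$. This is a finite-dimensional representation, so Fitting's lemma applies.
\item Pass to the limit over $S$. The aim is an idempotent endomorphism $e$ of $M$ whose image is indecomposable and satisfies $e_p\neq 0$.
\end{enumerate}
This is done by the functorial filtration / compactness argument of Crawley-Boevey, where pfd-ness is exactly what makes the relevant inverse systems of finite-dimensional affine spaces nonempty in the limit.

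The second step is to show that $\operatorname{End}(N)$ is local for an indecomposable pfd module $N$. Take an endomorphism $f$. At each point $q$, the map $f_q$ is an endomorphism of a finite-dimensional space, so it is either nilpotent or invertible on its Fitting summands. The Fitting decompositions are compatible along the structure maps, so they assemble into a decomposition of $N$. Indecomposability then forces $f$ to be either pointwise invertible, hence an isomorphism, or pointwise nilpotent. Pointwise nilpotent maps form the radical, so non-invertible endomorphisms are closed under addition and $\operatorname{End}(N)$ is local.

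The third step is to assemble the full decomposition. Apply Zorn's lemma to families of indecomposable submodules whose sum is direct and is a summand of $M$. The difficulty is that the union of a chain need not be a summand. I would handle this pointwise: by finite dimensionality at each $p$, the chain stabilizes at $p$, which gives a complement at $p$. Naturality of these complements then comes from the inverse-limit choice of idempotents.

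\textbf{Uniqueness.} Once the local endomorphism rings are in hand, I would invoke Azumaya's theorem directly. It states that in an additive category, two decompositions into objects with local endomorphism rings are related by a bijection preserving isomorphism classes. This is exactly the cited \cite[Theorem 1(ii)]{azumaya1950corrections}.

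\textbf{Main obstacle.} I expect the hardest step to be producing an indecomposable summand, and the compatible complements, in the infinite-poset setting. The first thing I would try is to reduce to finitely generated submodules via a compactness (Mittag-Leffler) argument over finite subposets.
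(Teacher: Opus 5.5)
\paragraph{Verdict.} The paper gives no proof of this statement. It quotes existence, with local endomorphism rings, from Botnan--Crawley-Boevey, and uniqueness from Azumaya. Your overall structure is sound, and your uniqueness step is exactly the cited one: Azumaya needs local endomorphism rings on one side only, and your Step~2 supplies them on both. There is a genuine gap, though, in the existence part, at exactly the step you flag yourself.

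\paragraph{Why the existence steps fail as stated.} Functorial filtrations work only for totally ordered index sets, where they produce interval decompositions. They have no analogue over a general poset, so they cannot give you an indecomposable summand here. The finite-subposet limit, as you describe it, also fails. Take an upset module $M$ over $\R^2$ and let $S=\{a,b\}$ be two incomparable points of its support. Then $M$ is indecomposable with $\operatorname{End}(M)=k$, yet $M|_S\cong k\oplus k$. This shows three things.
\begin{itemize}
\item Endomorphisms of $M|_S$ need not extend to $M$.
\item A primitive idempotent of $\operatorname{End}(M)$ need not restrict to a primitive one, so primitive idempotents do not form an inverse system.
\item Sets of idempotents are not affine spaces, so you cannot invoke nonemptiness of inverse limits of finite-dimensional affine spaces.
\end{itemize}
Step~3 has the same hole. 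Complements chosen separately at each point need not commute with the structure maps, and ``naturality from the inverse-limit choice of idempotents'' does not name a mechanism.

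\paragraph{The missing idea.} Pointwise finite-dimensionality makes every monotone chain stabilize at each point. So Zorn's lemma applies directly, and no compactness is needed.

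(a) Order the idempotents $e\in\operatorname{End}(M)$ with $e_p\neq 0$ by $e\preceq f$ iff $ef=fe=e$. Take a descending chain $(e_\lambda)$. At each $x$, the image of $(e_\lambda)_x$ shrinks and its kernel grows, so both stabilize and $(e_\lambda)_x$ is eventually constant. These eventual values form a natural idempotent $e$ with $e_p\neq 0$ and $e\preceq e_\lambda$ for all $\lambda$, so chains have lower bounds. Let $e$ be minimal and suppose $eM=A\oplus B$ with $A,B\neq 0$. The projections give orthogonal idempotents $e_1+e_2=e$ with $e_i\prec e$ and $(e_i)_p\neq 0$ for some $i$, a contradiction. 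So $eM$ is an indecomposable summand with $(eM)_p\neq 0$.

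(b) Run Zorn on pairs $(\mathcal F,C)$, where $\mathcal F$ is a set of indecomposable submodules and $M=\bigoplus_{N\in\mathcal F}N\oplus C$. Order them by $\mathcal F\subseteq\mathcal F'$ and $C=\bigoplus_{N\in\mathcal F'\setminus\mathcal F}N\oplus C'$. Along a chain the $C_\lambda$ decrease. Hence $C:=\bigcap_\lambda C_\lambda$ is a submodule with $C_x=(C_\lambda)_x$ for all large $\lambda$, and it complements $\bigoplus_{N\in\bigcup_\lambda\mathcal F_\lambda}N$. A maximal pair has $C=0$, by applying (a) to the pfd module $C$.

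If you prefer compactness, run it on a genuinely affine object. Let $S$ be the union of a chain of summands. The retractions of $M$ onto $S$ form an intersection of closed affine subspaces of $\prod_x\operatorname{Hom}_k(M_x,S_x)$, and this family has the finite intersection property.

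\paragraph{A smaller point in Step 2.} ``Pointwise nilpotent maps form the radical'' is asserted, not shown, and sums of nilpotent maps need not be nilpotent. Use instead the criterion that a nonzero ring is local once, for every $f$, either $f$ or $1-f$ is a unit. A non-invertible $f$ is pointwise nilpotent, so $1-f$ is pointwise invertible and hence an isomorphism.
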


\begin{definition}[]\label{def_barcode}
Let~$M$ be a pointwise finite-dimensional (pfd) module isomorphic to a direct sum~$M \cong \bigoplus_{i \in I} M_i$, where each~$M_i$ is nonzero and indecomposable. \\
We define the \emph{barcode} of~$M$ to be the multiset
\[
B(M) := \{ M_i \}_{i \in I},
\]
where the~$M_i$ are considered isomorphism classes of modules.\\
By Theorem~\ref{th_barcode} the barcode is well defined, and is an isomorphism invariant.
\end{definition}

\begin{definition}[]\label{def_shift}
Let~$M$ be a module and~$\varepsilon \in \mathbb{R}$. The \emph{$\varepsilon$-shift} of~$M$, denoted~$M(\varepsilon)$, is the module defined by
\[
M(\varepsilon)_p := M_{\mathbb{P}_\varepsilon(p)} \quad \text{and} \quad M(\varepsilon)_{p \to q} := M_{\mathbb{P}_\varepsilon(p) \to \mathbb{P}_\varepsilon(q)}.
\]
For~$\varepsilon \leq \delta$, define the morphism
\[
M_{\varepsilon \to \delta} : M(\varepsilon) \to M(\delta)
\]
by~$(M_{\varepsilon \to \delta})_p := M_{\mathbb{P}_\varepsilon(p) \to \mathbb{P}_\delta(p)}$.
\end{definition}
\begin{definition}
Let~$M$ be a module.  
A \emph{submodule}~$N$ of~$M$, denoted as N~$\subseteq$ M, is module such that~$N_p \subseteq M_p$ for all~$p$, and for every~$p \le q$, the following diagram commutes:
\[
\begin{tikzcd}
N_p \arrow[r,"N_{p\to q}"] \arrow[d,hook] & N_q \arrow[d,hook] \\
M_p \arrow[r,"M_{p\to q}"] & M_q
\end{tikzcd}
\]
\end{definition}

\begin{definition}[]\label{def_subquotient}
Let~$M$ be a module. A module~$N$ is called a \emph{subquotient} of~$M$ if there exists submodules~$K \subseteq I \subseteq M$ such that~$N \cong I/K$.

Equivalently, A module~$N$ is a \emph{subquotient} of~$M$ if there exists a module~$M'$, along with a monomorphism~$f: M' \hookrightarrow M$ and an epimorphism~$g: M' \twoheadrightarrow N$. 

\end{definition}

\begin{lemma}\label{lem_subquotient}
The subquotient relation is a partial order on the set of isomorphism classes of pfd modules.
\end{lemma}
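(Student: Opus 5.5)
Reflexivity and transitivity are direct; the real work is antisymmetry, where we use pointwise finite-dimensionality together with a dimension count.

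\emph{Reflexivity.} Take $K=0\subseteq I=M$, so $M\cong M/0$.

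\emph{Transitivity.} We use the second characterization. Suppose $N$ is a subquotient of $M$ via $M'\hookrightarrow M$ and $M'\twoheadrightarrow N$. Suppose $L$ is a subquotient of $N$ via $N'\hookrightarrow N$ and $N'\twoheadrightarrow L$. Form the pullback $M'':=M'\times_N N'$, computed pointwise as a fiber product of vector spaces with the induced structure maps. The projection $M''\to M'$ is a monomorphism, since it is the pullback of the monomorphism $N'\hookrightarrow N$. The projection $M''\to N'$ is an epimorphism, since it is the pullback of the epimorphism $M'\twoheadrightarrow N$; in $\mathbf{Vec}$, and hence pointwise in functor categories, pullbacks preserve epimorphisms. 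Composing gives $M''\hookrightarrow M'\hookrightarrow M$ and $M''\twoheadrightarrow N'\twoheadrightarrow L$, so $L$ is a subquotient of $M$.

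\emph{Antisymmetry: the dimension count.} Suppose $N\cong I/K$ with $K\subseteq I\subseteq M$, and $M\cong I'/K'$ with $K'\subseteq I'\subseteq N$. At each $p$ we have
\[
\dim N_p=\dim I_p-\dim K_p\le \dim M_p,
\]
and symmetrically $\dim M_p\le\dim N_p$. Since all these dimensions are finite, equality holds throughout. This forces $K_p=0$ and $I_p=M_p$ for every $p$, because a subspace of a finite-dimensional space with the same dimension is the whole space. Hence $K=0$ and $I=M$, so $N\cong M$.

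The main obstacle is the bookkeeping in the antisymmetry step. It works because the dimension count is carried out at every point $p$ separately, and equality at every point forces the submodules to be trivial or the whole module. Pointwise finite-dimensionality is essential here: without it the dimension argument fails, and antisymmetry can fail for infinite-dimensional modules.
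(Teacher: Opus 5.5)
Your proof is correct: the pullback $M'\times_N N'$ (which is just the preimage of $N'$ in $M'$) gives transitivity, and the pointwise inequalities $\dim M_p\le\dim N_p\le\dim M_p$ force $I=M$ and $K=0$, which is exactly where pointwise finite-dimensionality is needed. The paper gives no proof of this lemma (it quotes it from Bjerkevik), and your argument is the standard one for this fact.
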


A distance on modules is always understood to be an extended pseudo-metric. Extended means the distance might be infinite and pseudo means we allow distance zero between different modules.

\begin{definition}\label{def_interleaving }
An~$\varepsilon$\emph{-interleaving} between two modules~$M$ and~$N$ is a pair of morphisms
\[
\varphi : M \to N(\varepsilon) \quad \text{and} \quad \psi : N \to M(\varepsilon)
\]
such that
\[
\psi(\varepsilon) \circ \varphi = M_{0 \to 2\varepsilon} \quad \text{and} \quad \varphi(\varepsilon) \circ \psi = N_{0 \to 2\varepsilon}.
\]
If such an interleaving exists, we say that~$M$ and~$N$ are~$\varepsilon$-interleaved and write~$M \sim_\varepsilon N$.
The \emph{interleaving distance}~$d_I$ is defined by
\[
d_I(M, N) = \inf \{ \varepsilon \geq 0 \mid M \sim_\varepsilon N \}.
\]
\end{definition}

\begin{definition}\label{def_bottleneck}
Let~$M$ and~$N$ be pfd modules. A \emph{matching} from~$M$ to~$N$ is a bijection
\[
\sigma : \bar{B}(M) \to \bar{B}(N)
\]
where~$\bar{B}(M) \subseteq B(M)$ and~$\bar{B}(N) \subseteq B(N)$.\\
If in addition:
\begin{itemize}
    \item for each~$Q \in \bar{B}(M)$,~$Q \sim_\varepsilon \sigma(Q)$.
    \item for each~$Q \in (B(M) \setminus \bar{B}(M)) \cup (B(N) \setminus \bar{B}(N))$,~$Q \sim_\varepsilon 0$ (the zero module).
\end{itemize}
then~$\sigma$ is called an \emph{$ \varepsilon$-matching} from~$M$ to~$N$.

The \emph{bottleneck distance}~$d_B$ is defined by:
\[
d_B(M, N) = \inf \{ \varepsilon \geq 0 \mid \text{there exists an } \varepsilon\text{-matching between } M \text{ and } N \}.
\]
\end{definition}

We will use the notation~$x \in M$ for a module~$M$ to mean that~$x \in M_p$ for some point~$p \in \mathbb{P}$.

\subsection{Prunings}

In order to address some difficulties arising in the proof of his main result in~\cite{bjerkevik_stabilizing}, Bjerkevik introduced the notion of \emph{$\varepsilon$-pruning}. Although it was originally motivated by these specific difficulties, pruning turned out to possess robust theoretical properties. In this section, we will recall its definition, its construction and an example of computing a pruning.

\begin{definition}\label{def_pruning}
\emph{(Pruning)} Let~$M$ be a module. Let~$I$ be the largest submodule of~$M$ such that for any morphism~$f: M \to M(2\varepsilon)$, we have~$f(I) \subseteq M_{0 \to 2\varepsilon}(I)$. \\
Let~$K$ be the smallest submodule of~$I$ such that for any morphism~$f: M \to M(2\varepsilon)$, it holds that~$I^{-1}_{0 \to 2\varepsilon}(f(K)) \subseteq K$.\\
We define the \emph{$\varepsilon$-pruning pair} of~$M$ to be the pair~$(I, K)$, and the \emph{$\varepsilon$-pruning} of~$M$ to be 
\[
\Pru_\varepsilon(M) := (I/K)(-\varepsilon).
\]
\textit{For example} we have that the~$0$-pruning pair of a module~$M$ is~$(M, 0)$, and the~$0$-pruning of~$M$ is 
\[
\Pru_0(M) = (M/0)(0) \cong M.
\]
\end{definition}

\begin{lemma}\label{lem_pruning_construction}
Let~$M$ be a module. Then the~$\delta$-pruning pair~$(I, K)$ of~$M$ is well-defined.

Let~$\{ f_j \}_{j \in \Gamma}$ be the set of all morphisms from~$M$ to~$M(2\delta)$. Explicitly:
\begin{itemize}
  \item~$I = \bigcap_{i=0}^\infty I_i$, where we define
  \[
  I_0 = M, \quad I_i = \bigcap_{j \in \Gamma} f_j^{-1}(M_{0 \to 2\delta}(I_{i-1})) \subseteq M \quad \text{for all } i \geq 1.
  \]
  
  \item~$K = \bigcup_{i=0}^\infty K_i$, where we define
  \[
  K_0 = 0, \quad K_i = \sum_{j \in \Gamma} I^{-1}_{0 \to 2\delta}(f_j(K_{i-1})) \subseteq I \quad \text{for all } i \geq 1.
  \]
\end{itemize}
Moreover, for all~$i \geq 0$, we have~$I_i \supseteq I_{i+1}$ and~$K_i \subseteq K_{i+1}$.
\end{lemma}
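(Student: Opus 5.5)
We prove the lemma in three steps: show the $I_i$ decrease and $I$ is the largest submodule with the required property, then do the dual argument for the $K_i$ and $K$. Throughout we use that preimages under $f_j$, images under $M_{0\to 2\delta}$, arbitrary intersections, arbitrary sums and directed unions of submodules are again submodules. Here we need naturality of $f_j$ and of $M_{0\to2\delta}$, and we work pointwise, where each $M_p$ is a vector space.

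\textbf{Step 1: the $I_i$ decrease.} Each $I_i$ is a submodule of $M$ by the closure facts above. The operator
\[
\Phi(J)=\bigcap_{j\in\Gamma} f_j^{-1}(M_{0\to 2\delta}(J))
\]
is monotone in $J$. Since $I_1\subseteq M=I_0$, induction gives $I_{i+1}=\Phi(I_i)\subseteq \Phi(I_{i-1})=I_i$.

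\textbf{Step 2: $I$ is the largest submodule with the property.} Let $J$ be any submodule satisfying $f_j(J)\subseteq M_{0\to2\delta}(J)$ for all $j$, that is, $J\subseteq\Phi(J)$. By induction, $J\subseteq I_i$ for all $i$: indeed $J\subseteq \Phi(J)\subseteq\Phi(I_{i-1})=I_i$. Hence $J\subseteq I$.

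It remains to show that $I$ itself has the property, i.e.\ $I\subseteq\Phi(I)$. This is the main obstacle, because images do not commute with infinite decreasing intersections in general. What we can say easily is that $I\subseteq I_{i+1}=\Phi(I_i)$ for every $i$, so $f_j(x)\in M_{0\to2\delta}(I_i)$ for all $i$. We need the stronger statement $f_j(x)\in M_{0\to2\delta}(I)$.

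Pointwise, write $y=f_j(x)\in M_{p+2\delta}$ and consider the fibres $(M_{0\to2\delta})^{-1}(y)\cap (I_i)_p$. These form a decreasing sequence of nonempty affine subspaces of the vector space $M_p$. When $M$ is pfd, $M_p$ is finite-dimensional, so this sequence stabilises. The stable fibre is then nonempty and lies in every $(I_i)_p$, hence in $I_p$. This gives $y\in M_{0\to2\delta}(I)$.

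So we invoke pointwise finite dimensionality (the standing setting of the paper) at exactly this point. As a fallback in general, one could iterate transfinitely, taking intersections at limit ordinals until the sequence stabilises; the stable term is then a fixed point satisfying the property. Either way, $I$ is well defined.

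\textbf{Step 3: the $K_i$ and $K$.} The argument is dual, with the operator
\[
\Psi(L)=\sum_j I^{-1}_{0\to2\delta}(f_j(L)).
\]
\begin{itemize}
\item We first check $\Psi(L)\subseteq I$ for $L\subseteq I$. This holds because $f_j(L)\subseteq f_j(I)\subseteq M_{0\to2\delta}(I)$ and we take preimages inside $I$.
\item $\Psi$ is monotone and $K_0=0\subseteq K_1$, so induction gives $K_i\subseteq K_{i+1}$.
\item $K$ has the property. Since $K$ is a directed union, any $x\in K$ lies in some $K_{i}$, and every element of $I^{-1}_{0\to2\delta}(f_j(K))$ already lies in some $I^{-1}_{0\to2\delta}(f_j(K_i))\subseteq K_{i+1}$. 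This direction needs no finiteness, because images and preimages commute with directed unions.
\item $K$ is the smallest such submodule. If $L\subseteq I$ satisfies $\Psi(L)\subseteq L$, induction gives $K_i\subseteq L$ for all $i$, hence $K\subseteq L$.
\end{itemize}
This completes the proof that the pair $(I,K)$ is well defined and given by the stated formulas.
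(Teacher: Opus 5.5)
Your proof is correct for pfd modules. The paper gives no proof of this lemma; it quotes it from Bjerkevik. Your argument is the one the displayed formulas encode: monotonicity of the two operators, induction against any competitor, directed unions for $K$, and pointwise finite-dimensionality to show that $\bigcap_i I_i$ itself has the defining property. One small simplification: you do not need affine fibres. The chain $(I_i)_p$ is a decreasing chain of subspaces of the finite-dimensional space $M_p$, so it equals $I_p$ for large $i$. Hence $M_{p\to\Pb_{2\delta}(p)}((I_i)_p)=M_{p\to\Pb_{2\delta}(p)}(I_p)$ for large $i$.

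Your caution about pfd is justified. The hypothesis is needed by the statement itself, not only by your method. Well-definedness alone holds for any module, and more simply than by transfinite iteration. Submodules with the $I$-property are closed under arbitrary sums, since $f(\sum_a J_a)=\sum_a f(J_a)$, so the largest one is their sum. Submodules of $I$ with the $K$-property are closed under arbitrary intersections, and $I$ itself has that property, so the smallest one is their intersection.

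The explicit formula $I=\bigcap_{i\ge0}I_i$, however, can fail without pfd. Here is a counterexample.
\begin{itemize}
\item Let $V$ have basis $x$ and $e_{n,m}$ for $1\le m\le n$. Set $\varphi(e_{n,m})=e_{n,m-1}$ for $m\ge2$, $\varphi(e_{n,1})=x$ and $\varphi(x)=0$.
\item Let $M$ be the module over $\R$ with $M_t=V$ and $M_{s\to t}=\varphi^{\lfloor t\rfloor-\lfloor s\rfloor}$, and take $\delta=1/2$.
\item Naturality gives $f_t\varphi^i=\varphi^i f_{t-i}$ for every morphism $f\colon M\to M(1)$. The identity family is one such morphism. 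Together these give $(I_i)_t=\varphi^i(V)$, and therefore $\bigcap_i I_i=kx$ at every $t$.
\item The identity maps $kx$ onto itself, whereas $M_{0\to1}(kx)=\varphi(kx)=0$. So $\bigcap_i I_i$ fails the defining property, and the true $I$ is $0$.
\end{itemize}
So the explicit part of the lemma genuinely needs the pfd hypothesis, which is the setting in which prunings are used in the paper. Your transfinite fallback recovers well-definedness in general but, rightly, not the $\omega$-indexed formula.
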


\textbf{Example of computing the pruning.}

Let~$M$ be a 1-parameter module defined as~$M = M_1 \oplus M_2 \oplus M_3$,  
with~$M_1 = [0,8]$,~$M_2 = [2,8]$, and~$M_3 = [4,8]$, where each~$M_i$ is an interval module in the standard 1-parameter setting, which we do not define explicitly in this paper.\\
Let us calculate~$\Pru_1(M)$:\\
Using {\cite[Proposition 5.3]{bauer_lesnick}}, we have that if~$f: M \to N$ is a morphism of 1-parameter p.f.d. persistence modules and~$f$ maps~$[b,d] \text{ into } [b',d'].$
Then 
$
b' \le b < d' \le d.
$\\
So a morphism~$f$ from~$M$ to~$M(2)$ can map~$M_1$ into~$M_1(2)$ or~$M_2(2)$, and can map~$M_2$ or~$M_3$ into~$M_1(2)$ or~$M_2(2)$ or~$M_3(2)$. We choose~$f_1$ be a morphism from~$M$ to~$M(2)$ such that it maps:~$M_1 \mapsto M_2(2)$,~$M_2 \mapsto M_3(2)$,~$M_3 \mapsto M_3(2)$. Since~$\operatorname{supp}(M_3) \subseteq \operatorname{supp}(M_2) \subseteq \operatorname{supp}(M_1)$, the construction of~$f_1$ was made that its preimage to be included in the preimage of any other morphism~$f$ from~$M$ to~$M(2)$.
We set~$I_0 = M$.
\[
M_{0 \to 2}(I_0) \cong [0,6] \oplus [2,6] \oplus [4,6],
\]
Then:
\[ I_1 \cong \bigcap_{j \in \Gamma} f_j^{-1}(M_{0 \to 2\delta}(I_0)) \cong f_1^{-1}(M_{0 \to 2}(I_0))\cong [2,8] \oplus [4,8] \oplus [4,8].
\]
To compute~$I_2$, we apply the same reasoning:
\[
I_2 \cong f_1^{-1}(M_{0 \to 2}(I_1)) \cong [4,8] \oplus [4,8] \oplus [4,8].
\]
We observe that~$I_3 \cong I_2$, so the iteration has stabilized.\\
\[
K \cong K_1 \cong \Ker(I_{0 \to 2}) \cong [6,8] \oplus [6,8] \oplus [6,8].
\]
Finally, we obtain:
\[
\Pru_1(M) \cong I/K(-1) \cong [5,7]  \oplus [5,7] \oplus [5,7].
\]

\bigskip

\subsection{Pruning distance}

The~$\varepsilon$-pruning naturally leads to the definition of the \emph{pruning distance}. This distance is a promising candidate for a well-behaved metric in the multi-parameter setting. The main goal of this paper is to take a step toward establishing stability of this distance in the upset-decomposable modules. In addition to its stability properties, there are indications that the pruning distance may admit polynomial time computation, raising the prospect that it combines theoretical robustness with algorithmic tractability.

\begin{definition}\label{def_infty_refinement}
    
An \emph{$\infty$-refinement} of~$M\cong \bigoplus_{i \in I} M_i$ (where each~$M_i$ is indecomposable) is a module isomorphic to a module~$\bigoplus_{i \in I} N_i$ such that~$N_i$ is a subquotient of~$M_i$ for each~$i \in I$.

\end{definition}

The definition of~$\varepsilon$-refinement is described in \cite{bjerkevik_stabilizing} but will not be needed for the result of this paper.

\begin{definition}
\label{def_pruning_distance}
The \emph{pruning distance}~$d_P$ is defined by
\begin{align*}
d_P(M,N) = \inf\{&\varepsilon\geq 0\mid \forall \delta\geq 0,\\ &\Pru_{\varepsilon+\delta}(M) \text{ is an } \infty\text{-refinement of } \Pru_\delta(N) \text{ and }\\ &\Pru_{\varepsilon+\delta}(N) \text{ is an } \infty\text{-refinement of } \Pru_\delta(M)\}
\end{align*}
for any pfd modules~$M$ and~$N$.
\end{definition}

\begin{proposition}\label{propo_pruning_distance}
The pruning distance is a distance.
\end{proposition}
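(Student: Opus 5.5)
We must check that $d_P$ is an extended pseudometric: $d_P(M,M)=0$, symmetry, and the triangle inequality. Symmetry is immediate, since the defining condition in Definition~\ref{def_pruning_distance} is symmetric in $M$ and $N$. Nonnegativity holds by definition, and the value $+\infty$ is allowed.

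For $d_P(M,M)=0$, take $\varepsilon=0$. We need $\Pru_\delta(M)$ to be an $\infty$-refinement of itself for every $\delta\geq 0$. Decompose $\Pru_\delta(M)\cong\bigoplus_i P_i$ into indecomposables using Theorem~\ref{th_barcode}; this requires $\Pru_\delta(M)$ to be pfd, which holds because it is a subquotient of a shift of the pfd module $M$. Each $P_i$ is a subquotient of itself, so the identity gives the required refinement.

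For the triangle inequality, suppose $d_P(L,M)<\varepsilon_1$ and $d_P(M,N)<\varepsilon_2$. The conditions appear monotone in $\varepsilon$, but I would not rely on that, and instead pick admissible $\varepsilon_1,\varepsilon_2$ directly from the infima. Fix $\delta\geq 0$. Applying the $(M,N)$ condition at level $\delta$ shows that $\Pru_{\varepsilon_2+\delta}(N)$ is an $\infty$-refinement of $\Pru_\delta(M)$. Applying the $(L,M)$ condition at level $\varepsilon_2+\delta$ shows that $\Pru_{\varepsilon_1+\varepsilon_2+\delta}(L)$ is an $\infty$-refinement of $\Pru_{\varepsilon_2+\delta}(M)$. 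To chain these correctly, order them so that the refinement relation is composed. We want $\Pru_{\varepsilon_1+\varepsilon_2+\delta}(L)$ to be an $\infty$-refinement of $\Pru_\delta(N)$. The $(N,M)$ direction at level $\delta$ gives that $\Pru_{\varepsilon_2+\delta}(M)$ refines $\Pru_\delta(N)$. The $(M,L)$ direction at level $\varepsilon_2+\delta$ gives that $\Pru_{\varepsilon_1+\varepsilon_2+\delta}(L)$ refines $\Pru_{\varepsilon_2+\delta}(M)$. The symmetric statement follows in the same way. It therefore suffices to prove the following.

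\textbf{Key step:} $\infty$-refinement is transitive. Suppose $A$ refines $B$ and $B$ refines $C$. Write $C\cong\bigoplus_{i}C_i$, so that $B\cong\bigoplus_i B_i$ with $B_i$ a subquotient of $C_i$. The $B_i$ need not be indecomposable, so decompose each as $B_i\cong\bigoplus_{k}B_{ik}$ with $B_{ik}$ indecomposable. By Theorem~\ref{th_barcode}, the indecomposable decomposition of $B$ is $\{B_{ik}\}$ up to a bijection, so $A\cong\bigoplus_{i,k}A_{ik}$ with $A_{ik}$ a subquotient of $B_{ik}$. Now $B_{ik}$ is a direct summand, hence a subquotient, of $B_i$, which is a subquotient of $C_i$. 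By transitivity of the subquotient relation (Lemma~\ref{lem_subquotient}), each $A_{ik}$ is a subquotient of $C_i$. Finally, $A_i:=\bigoplus_k A_{ik}$ is a subquotient of $C_i$. This is the main obstacle, since it needs direct sums of subquotients of $C_i$ to remain a subquotient of $C_i$, which fails in general. It does hold here: $\bigoplus_k A_{ik}$ is a subquotient of $\bigoplus_k B_{ik}\cong B_i$, by taking direct sums of the submodule pairs $K\subseteq I$, and $B_i$ is a subquotient of $C_i$. So the grouping by the index $i$ is what makes the argument work.

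A remaining subtlety is indecomposables with zero summands: if some $B_i=0$, then the corresponding $A_i=0$, which is fine. Also, Definition~\ref{def_infty_refinement} requires the decomposition of $C$ into indecomposables, which is unique by Theorem~\ref{th_barcode}, so the choice of decomposition does not matter. Letting $\varepsilon_1,\varepsilon_2$ approach the infima then yields $d_P(L,N)\leq d_P(L,M)+d_P(M,N)$.
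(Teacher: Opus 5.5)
Your proof is correct. The paper gives no proof of this proposition: it is quoted in the preliminaries from Bjerkevik~\cite{bjerkevik_stabilizing}, where the proofs are said to be found. So there is no in-paper argument to compare against, and I have judged your proof on its own.

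Reducing the triangle inequality to transitivity of $\infty$-refinement is the natural route. You handle the one delicate point correctly. The summands $A_{ik}$ have to be regrouped by the index $i$ of the indecomposable $C_i$, so that $\bigoplus_k A_{ik}$ is a subquotient of $\bigoplus_k B_{ik}\cong B_i$, and hence of $C_i$ by transitivity of subquotients. Summing arbitrary subquotients of $C_i$ would not work. The appeal to Theorem~\ref{th_barcode}, to re-index "$A$ refines $B$" along the decomposition $\{B_{ik}\}$, is also justified: subquotient-ness is invariant under isomorphism, and $B_i$ is pfd.

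A few presentational fixes would tighten the write-up:
\begin{itemize}
\item Delete the first, non-chaining attempt. It uses the wrong clause of the $(M,N)$ condition, and its middle terms $\Pru_\delta(M)$ and $\Pru_{\varepsilon_2+\delta}(M)$ do not match.
\item In the reverse direction, say explicitly that the intermediate level is $\varepsilon_1+\delta$. That is, $\Pru_{\varepsilon_1+\varepsilon_2+\delta}(N)$ refines $\Pru_{\varepsilon_1+\delta}(M)$, which refines $\Pru_\delta(L)$.
\item Replace ``suppose $d_P(L,M)<\varepsilon_1$'' with ``choose admissible $\varepsilon_1,\varepsilon_2$ within $\eta$ of the infima''. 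Without monotonicity in $\varepsilon$, the first phrasing does not make $\varepsilon_1$ admissible, as you yourself note.
\end{itemize}
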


The conjecture below states that this distance is stable, and even Lipschitz equivalent to the interleaving distance if we restrict ourselves to modules of a bounded pointwise dimension.

For a module~$M$, let 
$
\operatorname{supdim} M = \sup_{p \in \mathbb{P}} \dim M_p \in \{0,1,\ldots,\infty\}.
$

\begin{conjecture}
\label{conjecture}
Let~$M$ and~$N$ be pfd modules with~$r = \operatorname{supdim} M<\infty$.
Then
\[
d_P(M,N)\leq d_I(M,N)\leq 2r\,d_P(M,N).
\]
\end{conjecture}

While the left-hand inequality of the conjecture was verified in~\cite{bjerkevik_stabilizing} in the case of~$\delta = 0$, its extension to arbitrary positive parameters remains open.

In~\cite{bjerkevik_stabilizing} many properties of the pruning were proved, notably~$\operatorname{Pru}_{\alpha}(M) \sim_{2r\alpha} M$ and the paper includes the following heuristic reasoning toward proving the right-hand inequality. Since~$d_P(M,N) \le \alpha$, the module~$\Pru_\alpha(M)$ is an~$\infty$-refinement of~$N$ and
$\Pru_\alpha(N)$ is an~$\infty$-refinement of~$M$. Hence, there exists a module~$P_1 := \Pru_\alpha(M)$ such that~$M \sim_{2r\alpha} P_1$ and~$P_1$ is an~$\infty$-refinement of~$N$. Similarly, there exists a module~$P_2 := \Pru_\alpha(N)$ such that~$N \sim_{2r\alpha} P_2$ and~$P_2$ is an~$\infty$-refinement of~$M$. This reasoning hopes to combine these statements and construct a~$2r\alpha$-interleaving between~$M$ and~$N$.

However, this intuition cannot be made precise in full generality and we show that using the following example. 

\begin{example}

We present in Figure \ref{fig_CI} a counter-example to the heuristic discussed above.
The construction is carried out within the class of upset-decomposable
modules, which are discussed in Section~\ref{sec:pruning-upset}.
These modules of the counter-example are obtained via suitable formalism of
constrained invertibility problems, or CI
problems for short, introduced in \cite{bjerkevik_botnan}. We do not recall the definition of CI problems here, referring instead to \cite{bjerkevik_botnan} and \cite{bjerkevik_Botnan_kerber} for the necessary background.
\begin{figure}
\centering
\begin{tikzpicture}[scale=1]
\node at (-3,.5){$\begin{bmatrix}
*&0\\
0&*
\end{bmatrix}$
$\begin{bmatrix}
0&*\\
*&0
\end{bmatrix}$};
\draw[color=black,fill=black] (0,0) circle (.1);
\node at (-.4,0){$M_2$};
\draw[color=black,fill=black] (0,1) circle (.1);
\node at (-.4,1){$M_1$};
\draw[color=black,fill=black] (2,0) circle (.1);
\node at (2.4,0){$N_2$};
\draw[color=black,fill=black] (2,1) circle (.1);
\node at (2.4,1){$N_1$};
\draw[thick, ->, shorten <=.3cm, shorten >=.3cm] (0,1) to (2,1);
\draw[thick, ->, shorten <=.3cm, shorten >=.3cm] (2,0) to (0,1);
\draw[thick, ->, shorten <=.3cm, shorten >=.3cm] (0,0) to (2,0);
\draw[thick, ->, shorten <=.3cm, shorten >=.3cm] (2,1) to (0,0);
\begin{scope}[xshift=4cm, yshift=2cm, scale=.15]
\draw[thick] (0,5) to (0,0) to (10,0) to (10,-6) to (19,-6) to (19,-13) to (25,-13) to (25,-23) to (29,-23);
\draw[thick,color=red] (1,5) to (1,1) to (11,1) to (11,-5) to (16,-5) to (16,-16) to (26,-16) to (26,-22) to (29,-22);
\draw[thick, dashed] (2,5) to (2,2) to (8,2) to (8,-8) to (17,-8) to (17,-15) to (27,-15) to (27,-21) to (29,-21);
\draw[thick,dashed,color=red] (3,5) to (3,3) to (9,3) to (9,-7) to (18,-7) to (18,-14) to (24,-14) to (24,-24) to (29,-24);
\draw[color=black,fill=black] (0,0) circle (.5);
\node[left] at (0,0){$p_0$};
\draw (0,0) to (5,5);
\draw[color=black,fill=black] (8,-8) circle (.5);
\node[left] at (8,-8){$p_1$};
\draw (8,-8) to (13,-3);
\draw[color=black,fill=black] (16,-16) circle (.5);
\node[left] at (16,-16){$p_2$};
\draw (16,-16) to (21,-11);
\draw[color=black,fill=black] (24,-24) circle (.5);
\node[left] at (24,-24){$p_3$};
\draw (24,-24) to (29,-19);
\end{scope}
\end{tikzpicture}
\caption{A CI problem on the left and the graph associated to it in the middle. On the right is an example of two upset-decomposable modules~$M = M_1 \oplus M_2~$and~$N = N_1 \oplus N_2$ associated to the CI problem, constructed using the proof of {\cite[Lemma 6.12]{bjerkevik_stabilizing}} (with~$C = 4$). The solid black, dashed black, solid red and dashed red curves are the boundaries of the upsets~$M_1$,~$M_2$,~$N_1$ and~$N_2$, respectively. For every~$i$ in~$\{0, 1, 2, 3\}$,  ~$p_i = (8i,-8i) \in \R^2$ and each diagonal step is adding~$(1,1)$ to the point before it. This construction gives us that~$d_I(M,N)=3$ using the same lemma.
}

\label{fig_CI}
\end{figure}
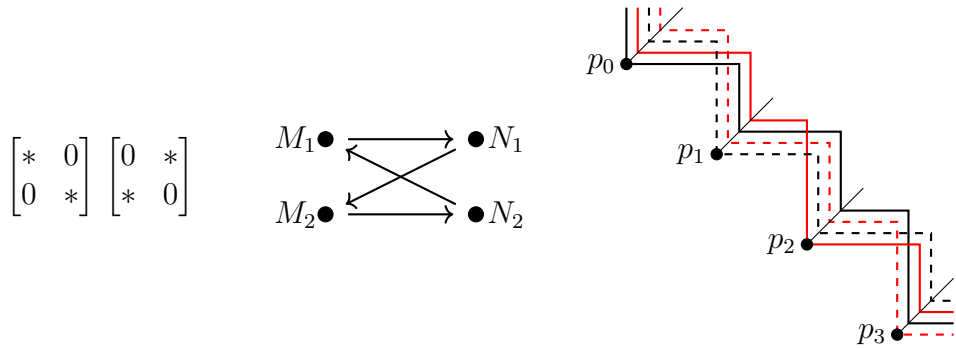

Let~$M = M_1 \oplus M_2~$and~$N = N_1 \oplus N_2$ be two upset-decomposable modules associated to Figure \ref{fig_CI}. We have~$M_1 \subseteq N_1(1)$ and~$M_2 \subseteq N_2(1)$ so~$M(-1)$ is an~$\infty$-refinement of~$N$ (and~$M(-1) \sim_1 M$). We also have~$N_1 \subseteq M_2(1)$ and~$N_2 \subseteq M_1(1)$ so~$N(-1)$ is an~$\infty$-refinement of~$M$ (and~$N(-1) \sim_1 N$), but~$d_I(M,N)=3$. Hence, despite satisfying the conditions stated above with~$P_1 = M(-1)$ and~$P_2 = N(-1)$, ~$M$ and~$N$ are not 1-interleaved, showing that the heuristic cannot be made precise in general.
\end{example}

In this paper, we explicitly compute the pruning for upset-decomposable modules in order to prove the inequality for this class of modules.

\clearpage
\section{Pruning of upset-decomposable modules}\label{sec:pruning-upset}
\markboth{3. Pruning of upset-decomposable modules}{}

\setcounter{theorem}{0} 
\renewcommand{\thetheorem}{3.\arabic{theorem}}

\subsection{Upset-decomposable modules}

In this section we recall the definition of upset-decomposable modules, the class of modules in which the main result of the next section will be proved. We also state and prove several standard lemmas that will be used repeatedly in the sequel. These results are well known in the literature. Precise statements and proofs of closely related results can be found as Lemmas~\ref{lem_upset_map1}, \ref{lem_upset_map2} and \ref{lem_upset_decomposable_map} in \cite{bjerkevik_Botnan_kerber}.

\begin{definition}\label{def_upset}

An \emph{upset interval} is a set U~$\subset \mathbb{R}^d$ such that~$\forall p \in U, \forall q \geq p$ is in U. \\
An \emph{upset module} M associated to U is a module defined by~$M_p=k$ if p~$in$ U and 0 else.~$M_{p \to q}=\mathrm{Id}_k, \forall p \in U$. And if p~$\notin$ U,~$M_p=0$ so automatically~$M_{p \to q} = 0$.\\
An \emph{upset-decomposable module} is a module isomorphic to a direct sum of upset modules.

\end{definition}

By abuse of notation, for upset modules~$M$ and~$N$, we write~$M \subseteq N$ to mean that~$\operatorname{supp}(M) \subseteq \operatorname{supp}(N)$,
which is equivalent to the existence of a monomorphism~$M \hookrightarrow N$.
If~$M = \bigoplus_{l=1}^r M_l$ and~$N = \bigoplus_{l=1}^r N_l$
are upset-decomposable, we write~$M \subseteq N$ to mean that
$M_l \subseteq N_l$ for all~$l = 1,\dots,r$.

\begin{lemma}\label{lem_shift_injection}
Let~$M$ be an upset module. Then for all~$\varepsilon \le \alpha$ we have
\[
M(\varepsilon) \subseteq M(\alpha),
\]
i.e., the shift morphism~$M_{\varepsilon \to \alpha} : M(\varepsilon) \to M(\alpha)$ is injective.
Therefore, if~$M=\bigoplus_{l=1}^r M_l$ is an upset-decomposable modules \[
\pi_l(M(\varepsilon)) \subseteq \pi_l(M(\alpha)), \forall l \in \{1,2,...,r\},
\]
with~$\pi_l$ denoting the canonical projection from~$M$ onto~$M_l$ in the direct sum.

\end{lemma}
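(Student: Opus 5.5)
The plan is to unwind the definitions of the shift and of an upset module, and then check pointwise that the shift morphism is the identity of $k$ wherever its source is nonzero. Let $U=\operatorname{supp}(M)$ and write $\mathbf{1}=(1,\ldots,1)$, so that $\mathbb{P}_\varepsilon(p)=p+\varepsilon\mathbf{1}$.

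First, by Definition~\ref{def_shift}, $M(\varepsilon)_p = M_{p+\varepsilon\mathbf{1}}$. This equals $k$ if $p+\varepsilon\mathbf{1}\in U$ and $0$ otherwise. Hence $M(\varepsilon)$ is again an upset module, with support $U-\varepsilon\mathbf{1}$; the translate of an upset is an upset.

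Second, since $\varepsilon\le\alpha$ we have $p+\varepsilon\mathbf{1}\le p+\alpha\mathbf{1}$. Because $U$ is upward closed, $p\in U-\varepsilon\mathbf{1}$ therefore implies $p\in U-\alpha\mathbf{1}$. This gives the support inclusion $\operatorname{supp}M(\varepsilon)\subseteq\operatorname{supp}M(\alpha)$, which is what $M(\varepsilon)\subseteq M(\alpha)$ means by the paper's convention.

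Third, for injectivity, we look at $(M_{\varepsilon\to\alpha})_p=M_{p+\varepsilon\mathbf{1}\to p+\alpha\mathbf{1}}$. If $p+\varepsilon\mathbf{1}\in U$, this map is $\mathrm{Id}_k$ by Definition~\ref{def_upset}, since the target point also lies in $U$. Otherwise the source is $0$. In both cases the map is injective, so $M_{\varepsilon\to\alpha}$ is a monomorphism.

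For the upset-decomposable case $M=\bigoplus_{l=1}^r M_l$, shifts commute with direct sums, so $M(\varepsilon)=\bigoplus_l M_l(\varepsilon)$ and $M_{\varepsilon\to\alpha}=\bigoplus_l (M_l)_{\varepsilon\to\alpha}$. Applying the first part to each summand gives $\pi_l(M(\varepsilon))=M_l(\varepsilon)\subseteq M_l(\alpha)=\pi_l(M(\alpha))$, and the inclusion is realized by the shift map. If $M$ is only isomorphic to such a sum, we transport the statement along the isomorphism, since shifting is functorial.

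There is no real obstacle here. The only point needing care is the convention that "$\subseteq$" for upset modules means support inclusion, and checking that the structure map is the identity rather than merely nonzero.
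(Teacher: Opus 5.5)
Your proof is correct and follows the same route as the paper. Both arguments check pointwise that $(M_{\varepsilon\to\alpha})_p = M_{p+\varepsilon\mathbf{1}\to p+\alpha\mathbf{1}}$ is either $\mathrm{Id}_k$ or has zero source, hence is injective; you also spell out the support inclusion and the direct-sum case, which the paper leaves implicit.
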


\begin{proof}
For any~$p \in \Pb$, we have~$\Pb_\varepsilon(p) \le \Pb_\alpha(p)$ and since~$M$ is an upset module~$
(M_{\varepsilon \to \alpha})_p 
= M_{\Pb_\varepsilon(p) \to \Pb_\alpha(p)}
$ is injective for every~$p$.
Thus~$M_{\varepsilon \to \alpha}$ is an injection, so
$M(\varepsilon) \subseteq M(\alpha)$.
\end{proof}

\begin{lemma}\label{lem_upset_submodule}
Let~$M$ be an upset module. Then every submodule~$N$ of~$M$ is an upset module.
\end{lemma}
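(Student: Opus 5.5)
We have to show that every submodule $N$ of an upset module $M$ associated to an upset $U$ is itself an upset module. That means exhibiting an upset $V\subseteq U$ with $N_p=k$ for $p\in V$, $N_p=0$ otherwise, and identity structure maps on $V$.

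\textbf{Step 1: define the candidate support.} Since $N_p\subseteq M_p$ and $M_p$ is either $k$ or $0$, each $N_p$ is either $0$ or all of $M_p=k$; a subspace of a one-dimensional space has no other option. So we set
\[
V:=\{p\in\mathbb{R}^d \mid N_p\neq 0\}=\{p \mid N_p=k\}\subseteq U .
\]

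\textbf{Step 2: $V$ is upward closed.} Let $p\in V$ and $q\geq p$. Then $q\in U$ because $U$ is an upset, and $M_{p\to q}=\mathrm{Id}_k$. By the commutative square in the definition of submodule, $N_{p\to q}$ is the restriction of $M_{p\to q}$ to $N_p=k$. Its image is therefore $k$, and it lies in $N_q$, so $N_q=k$ and $q\in V$.

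\textbf{Step 3: identify the structure maps.} For $p\le q$ both in $V$, the restriction of $\mathrm{Id}_k$ gives $N_{p\to q}=\mathrm{Id}_k$. If $p\notin V$, then $N_p=0$ and $N_{p\to q}=0$ automatically. Hence $N$ is equal (not merely isomorphic) to the upset module associated to $V$, which also matches the convention $N\subseteq M$ via $V\subseteq U$.

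Step 2 is the only step with any content, and the key point there is that the structure maps of $M$ are injective on the support. If a general upset module were only defined up to isomorphism, with the identity maps replaced by isomorphisms, the same argument would go through with those isomorphisms in place of $\mathrm{Id}_k$.
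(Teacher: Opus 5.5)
Your proof is correct and follows essentially the same route as the paper: both observe that $N_p\subseteq M_p$ forces $N_p\in\{0,k\}$, then use the commutative submodule square with $M_{p\to q}=\mathrm{Id}_k$ to get $N_q=k$ and $N_{p\to q}=\mathrm{Id}_k$ for $q\ge p$. Your version is slightly more explicit than the paper's. It covers $N=0$ via $V=\emptyset$, and it gets $N_q=k$ directly from the image of $N_{p\to q}$, where the paper instead asserts that the top arrow is an isomorphism.
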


\begin{proof}
Let~$N$ be a nonzero submodule of~$M$.  
Pick any~$p \in \operatorname{supp}(N)$ and any~$q \ge p$. Since~$N \subseteq M$ and~$M$ is upset-decomposable, we have~$p,q \in \operatorname{supp}(M)$. In particular,$
M_p = M_q = k$, so the inclusion
$0 \neq N_p \subseteq M_p = k$ forces~$N_p = k$. And since~$M_{p \to q} = \mathrm{id}_k$, the diagram induced by the
submodule~$N \subseteq M$ is
\[
\begin{tikzcd}[row sep=3em, column sep=4em]
N_p = k \arrow[r] \arrow[d,hook] & N_q \arrow[d,hook] \\
M_p = k \arrow[r, "\mathrm{id}_k"'] & M_q = k
\end{tikzcd}
\]
The bottom arrow is an isomorphism and the vertical maps are injections so we get that the top arrow is also an isomorphism. In particular~$N_q = k$ and~$N_{p \to q} =\mathrm{id}_k$.
So~$q \in \operatorname{supp}(N)$.  
Therefore~$N$ is an upset module.
\end{proof}

\begin{lemma}\label{lem_upset_subquotient}
Let~$M$ and~$N$ be a nonzero upset modules.
If~$N$ is a subquotient of~$M$ then~$N$ is isomorphic to a submodule of~$M$ so~$N \subseteq M.$
\end{lemma}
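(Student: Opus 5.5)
We will use the second characterization of subquotients in Definition~\ref{def_subquotient}. Fix a module $M'$, a monomorphism $f\colon M'\hookrightarrow M$ and an epimorphism $g\colon M'\twoheadrightarrow N$. The plan is to show that $\operatorname{supp}(N)\subseteq\operatorname{supp}(M)$. Since $N$ and $M$ are both upset modules, this inclusion is exactly what the abuse of notation $N\subseteq M$ means. It also produces a monomorphism $N\hookrightarrow M$: take the identity of $k$ at each $p\in\operatorname{supp}(N)$, which is natural because all internal maps are identities on the support. Its image is a submodule of $M$ isomorphic to $N$.

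First, replace $M'$ by its image $f(M')\subseteq M$. This is possible because $f$ is injective, so $M'\cong f(M')$. By Lemma~\ref{lem_upset_submodule}, $M'$ is then an upset module with $\operatorname{supp}(M')\subseteq\operatorname{supp}(M)$; if $M'=0$ then $N=0$, which is excluded by hypothesis. Second, since $g$ is an epimorphism of modules, it is surjective pointwise, so $N_p\neq 0$ forces $M'_p\neq 0$. This gives
\[
\operatorname{supp}(N)\subseteq\operatorname{supp}(M')\subseteq\operatorname{supp}(M).
\]
Concluding then only requires the construction of the monomorphism described above.

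The only point needing care is the assertion that epimorphisms and monomorphisms in the functor category are pointwise surjective and injective. This is standard, since kernels and cokernels in $\mathbf{Vec}^{\mathbb{P}}$ are computed pointwise, so I do not expect a genuine obstacle. The one subtlety I would check explicitly is that the candidate map $N\to M$ commutes with the structure maps when $p\le q$ with $p\in\operatorname{supp}(N)$. In that case $q\in\operatorname{supp}(N)$ as well, because $N$ is an upset, so both structure maps are $\mathrm{id}_k$ and the naturality square commutes trivially.
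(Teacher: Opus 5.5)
Your proof is correct, but it takes a different route from the paper.

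The paper uses a presentation $N\cong M'/M''$ with $M''\subseteq M'\subseteq M$ and shows by contradiction that $M''=0$. If $M''\neq 0$, take $p_1\in\operatorname{supp}(M')\setminus\operatorname{supp}(M'')$ and $p_2\in\operatorname{supp}(M'')$. A common upper bound $p_3$ of the two lies in both supports, so $N_{p_3}=0$. This contradicts the fact that $N$ is an upset containing $p_1$. Hence $N\cong M'$ is itself a submodule of $M$.

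You instead compare supports pointwise: $N_p$ is a subquotient of $M_p$, so $\operatorname{supp}(N)\subseteq\operatorname{supp}(M)$. You then build the embedding $N\hookrightarrow M$ directly. You use that $N$ is an upset exactly where it is needed, namely to rule out $p\in\operatorname{supp}(N)$, $q\notin\operatorname{supp}(N)$ in the naturality square. The case $p\notin\operatorname{supp}(N)$, which you did not spell out, is trivial since $N_p=0$.

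What each route buys:
\begin{itemize}
\item Your argument is shorter. Lemma~\ref{lem_upset_submodule} is not really needed, since injectivity of $f$ already gives $\operatorname{supp}(M')\subseteq\operatorname{supp}(M)$. It also never uses common upper bounds in $\mathbb{R}^d$, so it works over any poset.
\item The paper's argument gives more specific information: any presentation of $N$ as a subquotient of $M$ must have trivial denominator $M''$, so the embedding is the given inclusion $M'\subseteq M$. This relies on the directedness of $\mathbb{R}^d$. On a discrete two-point poset, for example, $M''\neq 0$ can occur while the lemma's conclusion still holds.
\end{itemize}
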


\begin{proof}
\[
N \cong M'/M''
\]
with~$M''\subseteq M'\subseteq M$.  Since~$M$ is an upset module, every submodule of~$M$ is an upset module by Lemma~\ref{lem_upset_submodule}, so~$M'$ and~$M''$ are upset modules.  Because~$N$ is nonzero, there exists
$p_1\in\operatorname{supp}(M')\setminus\operatorname{supp}(M'')$. Assume for contradiction that~$M''\neq 0$. Pick~$p_2\in\operatorname{supp}(M'')$.Since we are working in~$\mathbb{R}^d$, we can pick~$p_3$ such that~$p_3\ge p_1$ and~$p_3\ge p_2$. We have
\[
p_3\in\operatorname{supp}(M')\quad\text{and}\quad p_3\in\operatorname{supp}(M'').
\]
Hence~$N_{p_3} \cong (M'/M'')_{p_3}=0$. But~$p_3\ge p_1$ so~$p_3\in\operatorname{supp}(N)$, a contradiction. Therefore~$M''=0$. Thus~$N\cong M'$ with~$M'$ a submodule of~$M$.
\end{proof}

\begin{lemma}\label{lem_upset_map1} 
Let~$M$ and~$N$ be nonzero upset modules, and let
$f\colon M\to N$. Then there exists
$\lambda\in k$ such that for every~$p\in\operatorname{supp}(M)$,~$
f_p(1)=\lambda.
$
\end{lemma}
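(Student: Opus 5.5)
The plan is to exploit the two facts that make an upset module rigid: all its structure maps on the support are the identity of $k$, and any two points of $\mathbb{R}^d$ have a common upper bound. Start by fixing a basis identification $M_p = k$ for $p \in \operatorname{supp}(M)$ and $N_q = k$ for $q \in \operatorname{supp}(N)$. For each $p \in \operatorname{supp}(M)$, define $\lambda_p := f_p(1)$. If $p \notin \operatorname{supp}(N)$, then $N_p = 0$ and $\lambda_p = 0$ is forced. The goal is to show that $p \mapsto \lambda_p$ is constant on $\operatorname{supp}(M)$.

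\textbf{Step 1: comparable points.} Take $p \le q$ with $p \in \operatorname{supp}(M)$, so $q \in \operatorname{supp}(M)$ as well. Naturality of $f$ gives
\[
f_q \circ M_{p\to q} = N_{p \to q} \circ f_p .
\]
The left side sends $1$ to $f_q(1) = \lambda_q$, because $M_{p\to q} = \mathrm{Id}_k$. For the right side there are two cases:
\begin{itemize}
\item If $p \in \operatorname{supp}(N)$, then $N_{p\to q} = \mathrm{Id}_k$, so the right side equals $\lambda_p$.
\item If $p \notin \operatorname{supp}(N)$, then $N_p = 0$, so $\lambda_p = 0$ and the right side is $0 = \lambda_p$.
\end{itemize}
In both cases $\lambda_q = \lambda_p$. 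Here $\lambda_q$ is read as $0$ if $q \notin \operatorname{supp}(N)$, which is consistent, since then $N_q = 0$ forces $f_q = 0$.

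\textbf{Step 2: arbitrary points.} Take any $p_1, p_2 \in \operatorname{supp}(M)$. Choose $p_3 \ge p_1, p_2$, for instance the coordinatewise maximum, exactly as in the proof of Lemma~\ref{lem_upset_subquotient}. Then $p_3 \in \operatorname{supp}(M)$, and Step 1 gives $\lambda_{p_1} = \lambda_{p_3} = \lambda_{p_2}$. Setting $\lambda$ to be this common value finishes the proof.

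\textbf{Main obstacle.} The only delicate point is bookkeeping at points outside $\operatorname{supp}(N)$, where $N_p = 0$ and the scalar $f_p(1)$ has to be read as $0$. The case split in Step 1 handles this, so the argument is short and no genuine obstruction is expected.
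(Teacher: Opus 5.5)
Your proof is correct and follows the same route as the paper: both use naturality along $p_i \le p_3$ for a common upper bound $p_3 \in \operatorname{supp}(M)$ to get $f_{p_1}(1)=f_{p_3}(1)=f_{p_2}(1)$. The only difference is that the paper handles the comparison step by citing injectivity of the structure maps of $N$ (via Lemma~\ref{lem_shift_injection}) and a diagram like the one in Lemma~\ref{lem_upset_submodule}, whereas you treat the case $p\notin\operatorname{supp}(N)$ explicitly, which is slightly more careful.
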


\begin{proof}
Pick~$p_1 \in \operatorname{supp}(M)$ and set~$\lambda = f_{p_1}(1)$.  
Let~$p_2 \in \operatorname{supp}(M)$ let us prove that~$\lambda = f_{p_2}(1).$
Choose~$p_3 \in \operatorname{supp}(M)$ such that~$p_1 \le p_3$ and~$p_2 \le p_3$.  
By Lemma~\ref{lem_shift_injection}, the shift maps in~$M$ and~$N$ from~$p_1$ to~$p_3$  
and from~$p_2$ to~$p_3$ are injections.  
Using commutative diagrams identical in shape to the one in Lemma~\ref{lem_upset_submodule},  
we obtain
\[
\lambda = f_{p_1}(1) = f_{p_3}(1) = f_{p_2}(1).
\]
\end{proof}

\begin{lemma}\label{lem_upset_map2}
Let~$M$ and~$N$ be upset modules.  
If~$M \not\subseteq N$, then the only possible morphism~$f\colon M \to N$ is the zero map.  
If~$M \subseteq N$, then every choice of~$\lambda \in k$ yields a morphism~$f\colon M \to N$.
\end{lemma}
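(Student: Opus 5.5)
The statement splits into two parts, and we treat them in turn.

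For the first part, assume $M \not\subseteq N$ and let $f\colon M\to N$ be a morphism. By Lemma~\ref{lem_upset_map1} (the case $M=0$ or $N=0$ being trivial), there is a scalar $\lambda\in k$ with $f_p(1)=\lambda$ for every $p\in\operatorname{supp}(M)$. It therefore suffices to show $\lambda=0$. Since $M\not\subseteq N$, we may pick $p\in\operatorname{supp}(M)\setminus\operatorname{supp}(N)$. Then $N_p=0$, so $f_p(1)=0$, which forces $\lambda=0$. Hence $f_p=0$ at every point of $\operatorname{supp}(M)$. Outside the support $M_p=0$, so $f_p=0$ there automatically, and $f$ is the zero map.

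For the second part, assume $\operatorname{supp}(M)\subseteq\operatorname{supp}(N)$ and fix $\lambda\in k$. We define $f_p\colon k\to k$ as multiplication by $\lambda$ for $p\in\operatorname{supp}(M)$, and $f_p=0$ otherwise. This is well typed because $N_p=k$ whenever $p\in\operatorname{supp}(M)$. It remains to check naturality, $N_{p\to q}\circ f_p=f_q\circ M_{p\to q}$ for all $p\le q$, which we do by cases.
\begin{itemize}
\item If $p\notin\operatorname{supp}(M)$, then $M_p=0$ and both sides are zero maps out of the zero space.
\item If $p\in\operatorname{supp}(M)$, then $q\in\operatorname{supp}(M)\subseteq\operatorname{supp}(N)$ because $\operatorname{supp}(M)$ is an upset. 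So $M_{p\to q}$ and $N_{p\to q}$ are both $\mathrm{id}_k$, and both sides equal multiplication by $\lambda$.
\end{itemize}
Thus $f$ is a morphism.

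There is no real obstacle here. The only point needing care is the case analysis in the naturality check, specifically the fact that $q$ lies in the support whenever $p$ does, which uses the upset property together with the inclusion of supports.
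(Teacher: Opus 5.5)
Your proof is correct and follows the same route as the paper: in the first case you force the constant $\lambda$ from Lemma~\ref{lem_upset_map1} to vanish at a point of $\operatorname{supp}(M)\setminus\operatorname{supp}(N)$, and in the second case you define $f_p$ as multiplication by $\lambda$ on $\operatorname{supp}(M)$. Your explicit naturality check fills in a step the paper skips. The check uses that $q\in\operatorname{supp}(M)\subseteq\operatorname{supp}(N)$ whenever $p\in\operatorname{supp}(M)$ and $p\le q$. The paper instead cites Lemma~\ref{lem_upset_map1}, which constrains the form of a morphism but does not show that one exists.
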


\begin{proof}
First case:~$M \not\subseteq N$.  
Then there exists~$p_1 \in \operatorname{supp}(M)$ such that~$p_1 \notin \operatorname{supp}(N)$.  
Thus~$f_{p_1}\colon M_{p_1}\to N_{p_1}=0$ is forced to be the zero map.  
By Lemma~\ref{lem_upset_map1},~$f_p(1)$ must be constant on~$\operatorname{supp}(M)$, so  
$f_{p}(1)=0$ for all~$p$, hence~$f$ is the zero morphism.

Second case:~$M \subseteq N$.  
Let~$\lambda\in k$ and define~$f_p\colon M_p\to N_p$ by~$f_p(1)=\lambda$ for every  
$p\in\operatorname{supp}(M)$ (and~$f_p=0$ if~$M_p=0$).  
By Lemma~\ref{lem_upset_map1}, this is the only possible form of a nonzero morphism.  
\end{proof}

\begin{lemma}\label{lem_upset_decomposable_map}
Let~$M=\bigoplus_{l=1}^r M_l \text{ and } N=\bigoplus_{s=1}^r N_s$ be upset–decomposable modules.  
A map~$f\colon M \to N$ is a morphism if and only if each component
$
f_{l}^s\colon M_l \longrightarrow N_s
$
is a morphism of modules of upset modules,~$\forall l,s \in \{1,\dots,r\}$.  
Therefore any morphism~$f$ can be determined by an~$r\times r$ matrix with entries in~$k$.
\end{lemma}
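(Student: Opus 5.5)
The plan is to reduce everything to the universal property of the direct sum in the category of persistence modules, applied pointwise. Since $M=\bigoplus_{l=1}^r M_l$ and $N=\bigoplus_{s=1}^r N_s$ are finite direct sums, we have for every $p$ the decompositions $M_p=\bigoplus_l (M_l)_p$ and $N_p=\bigoplus_s (N_s)_p$. The structure maps are block diagonal:
\[
M_{p\to q}=\operatorname{diag}\bigl((M_l)_{p\to q}\bigr), \qquad N_{p\to q}=\operatorname{diag}\bigl((N_s)_{p\to q}\bigr).
\]
Consider a family of linear maps $f=(f_p)_p$ with $f_p\colon M_p\to N_p$. We define its components by $f_l^s:=\pi_s\circ f\circ \iota_l$, where $\iota_l$ is the canonical inclusion of $M_l$ and $\pi_s$ the canonical projection onto $N_s$. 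Pointwise, $f_p$ is then the $r\times r$ block matrix $\bigl((f_l^s)_p\bigr)_{s,l}$.

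For the forward implication, $\iota_l$ and $\pi_s$ are morphisms of modules, because their components commute with the block-diagonal structure maps. Hence if $f$ is a morphism, each $f_l^s$ is a composite of morphisms and is itself a morphism.

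For the converse, assume each $f_l^s$ is natural. We compute both sides of the naturality square blockwise:
\[
\bigl(N_{p\to q}\circ f_p\bigr)_{s,l}=(N_s)_{p\to q}\circ (f_l^s)_p \quad\text{and}\quad \bigl(f_q\circ M_{p\to q}\bigr)_{s,l}=(f_l^s)_q\circ (M_l)_{p\to q}.
\]
These agree for every pair $(s,l)$ by naturality of $f_l^s$, so $f$ is natural. This step is the only place where we need the structure maps to be block diagonal, and that holds by the definition of the direct sum.

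For the final assertion, we apply Lemma~\ref{lem_upset_map1} and Lemma~\ref{lem_upset_map2} to each component $f_l^s\colon M_l\to N_s$, which is a morphism between upset modules. Each such component is either zero, which is forced when $M_l\not\subseteq N_s$, or is given by a single scalar $\lambda_{s,l}\in k$ acting uniformly on $\operatorname{supp}(M_l)$. If $M_l$ is zero, we set $\lambda_{s,l}=0$. So $f$ is determined by the matrix $(\lambda_{s,l})\in k^{r\times r}$, with the constraint that $\lambda_{s,l}=0$ whenever $M_l\not\subseteq N_s$.

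Conversely, any matrix satisfying this constraint defines a morphism. By Lemma~\ref{lem_upset_map2} each entry gives a morphism $M_l\to N_s$, and by the first part these assemble into a morphism $f$.

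No real obstacle is expected. The one point to be careful about is that the matrix determines $f$ only after fixing the chosen decompositions, namely the identifications $M_l\cong$ an upset module with basis $1$ on its support. This is what makes the scalar $\lambda_{s,l}$ well defined.
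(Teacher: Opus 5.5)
Your proposal is correct and follows essentially the paper's argument: the structure maps are block-diagonal with respect to the decompositions, so the naturality square for $f$ commutes if and only if every component square for $f_l^s$ commutes. Beyond the paper's proof, you spell out the matrix assertion by applying Lemmas~\ref{lem_upset_map1} and~\ref{lem_upset_map2} to each component, which the paper leaves implicit in its remark after the lemma.
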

\begin{proof}
    Let~$p \leq q$, consider the folowing diagram: 
    \[
\begin{tikzcd}[row sep=3em, column sep=4em]
M_p \arrow[r, " M_{p \to q}"] \arrow[d, "f_p"] & M_q \arrow[d, "f_q"] \\
N_p \arrow[r, " N_{p \to q}"] & N_q
\end{tikzcd}
\]
Since
$
M_p=\bigoplus_{l=1}^r (M_l)_p \text{ and }
N_q=\bigoplus_{s=1}^r (N_s)_q
$, the diagram commutes if and only if for all~$l$ and~$s$, the restrictions to~$(M_l)_p$ followed by the projection to~$(N_s)_q$  of the two compositions are the same.
Equivalently, for all~$l$ and~$s$ the following diagram must commute:
\[
\begin{tikzcd}[column sep=4em,row sep=3em]
(M_l)_p \arrow[r,"(M_l)_{p\to q}"] \arrow[d,"(f^s_l)_p"'] &
(M_l)_q \arrow[d,"(f^s_l)_q"] \\
(N_s)_p \arrow[r,"(N_s)_{p\to q}"'] &
(N_s)_q
\end{tikzcd}
\]
where~$(f^s_l)_p$ denotes the restriction of~$f_p$ to the summand~$M_l$ followed by projection to the summand~$N_s$.
But~$f=\{f_p\}$ defines a morphism if and only if the first diagram commutes for all~$p\le q$, and the restriction~$(f^s_l)_p$ defines a morphism~$M_l\to N_s$ if and only if the second diagram commutes for all~$p\le q$, thus we have proved the equivalence.
\end{proof}

Let us denote the matrix described in Lemma~\ref{lem_upset_decomposable_map} by~$A^f$, as an immediate result of Lemma~\ref{lem_upset_map2}
$
A^f_{l,s}=0\text{ whenever } M_l \not\subseteq N_s .
$
We can determine~$A^f_{l,s}$ (therefore~$f$) just by determining~$f_p$ for any p~$\in \bigcap_{l=1}^r \operatorname{supp}(M_l)$.

$M_{l_1} \bigcap M_{l_2}$ will denote the upset module with~$\operatorname{supp}(M_{l_1} \bigcap M_{l_2})$ =~$\operatorname{supp}(M_{l_l}) \bigcap \operatorname{supp}(M_{l_2}).$\\

The following lemma does not seem to appear explicitly in the existing literature and will be needed in the sequel.

\begin{lemma}\label{lem_upset_decomposable_inverse_image}
Let~$M=\bigoplus_{l=1}^r M_l \text{ and } N=\bigoplus_{s=1}^r N_s$ be upset–decomposable modules, let~$f:M \to N$ and~$N' = \bigoplus_{s=1}^r N'_s \subseteq N.$ Then
\[
\left( f_{|_{M_l}} \right)^{-1}(N')
  = M_l \bigcap \left( \bigcap_{s \in J_l^f} N'_s \right).
\] where~$J_{l}^f=\{s \in \{1,2,...,r\}: A_{l,s}^f \neq 0\} \subseteq \{1,2,...,r\}.$
Moreover,
$
f^{-1}(N') =\bigoplus_{l=1}^r \pi_l(f ^{-1}(N'))= \bigoplus_{l=1}^r \left( f_{|_{M_l}} \right)^{-1}(N').
$
\end{lemma}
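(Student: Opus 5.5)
The first formula is a pointwise computation. Fix $l$ and a point $p\in\operatorname{supp}(M_l)$, and let $1\in (M_l)_p=k$ be the generator. By Lemma~\ref{lem_upset_decomposable_map} and the matrix $A^f$, we have
\[
f_p(1)=\sum_{s=1}^r A^f_{l,s}\,e_s ,
\]
where $e_s$ is the generator of $(N_s)_p$. This uses Lemma~\ref{lem_upset_map2}: if $A^f_{l,s}\neq 0$ then $M_l\subseteq N_s$, so $(N_s)_p=k$. Since $N'\subseteq N$ is also written as a direct sum $\bigoplus_s N'_s$ with $N'_s\subseteq N_s$, membership in $N'_p$ can be checked coordinatewise. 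Hence $f_p(1)\in N'_p$ if and only if $e_s\in (N'_s)_p$ for every $s$ with $A^f_{l,s}\neq 0$, that is, if and only if $p\in\bigcap_{s\in J^f_l}\operatorname{supp}(N'_s)$.

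Scalars do not affect this: $f_p(\mu)\in N'_p$ for $\mu\neq 0$ iff $f_p(1)\in N'_p$, and $0$ always lies in the preimage. Therefore the fibre of $(f_{|_{M_l}})^{-1}(N')$ at $p$ is $k$ exactly when $p\in\operatorname{supp}(M_l)\cap\bigcap_{s\in J^f_l}\operatorname{supp}(N'_s)$, and $0$ otherwise. The structure maps are inherited from $M_l$, so this is the upset module $M_l\cap\bigl(\bigcap_{s\in J_l^f}N'_s\bigr)$. By Lemma~\ref{lem_upset_submodule} it is indeed an upset submodule, which gives the first equality.

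For the ``moreover'' part, I would prove two inclusions. The first is $\bigoplus_l (f_{|_{M_l}})^{-1}(N')\subseteq f^{-1}(N')$. It is immediate, because $f$ is additive and $N'_p$ is a subspace. The second is
\[
f^{-1}(N')\subseteq\bigoplus_l \pi_l\bigl(f^{-1}(N')\bigr)\subseteq \bigoplus_l (f_{|_{M_l}})^{-1}(N').
\]
Its first inclusion is trivial. For its second, take $x=\sum_l x_l\in f^{-1}(N')_p$ with $x_l\in (M_l)_p$. The $N_s$-coordinate of $f_p(x)$ is $\sum_l A^f_{l,s}x_l$, and this must lie in $(N'_s)_p$ for every $s$. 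The goal is to deduce that $A^f_{l,s}x_l\in(N'_s)_p$ for each $l$ separately.

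This coordinate separation is the main obstacle, because cancellation between different $l$ can occur. Consider $M_1=M_2=N_1$, $N'_1=0$, and $A^f$ with first column $(1,1)$. Then $(1,-1)$ lies in $f^{-1}(N')$, but neither coordinate lies in the corresponding $(f_{|_{M_l}})^{-1}(N')$. So the second inclusion cannot hold for an arbitrary single $f$, and the proof must use extra structure. My first attempt would be to locate that structure in how the lemma is applied. I would check whether in the intended uses (e.g.\ Lemma~\ref{lem_pruning_construction}) each column $s$ with $N'_s\neq N_s$ at $p$ has at most one nonzero entry among the relevant $l$. In that case the sum $\sum_l A^f_{l,s}x_l$ has a single term and the coordinatewise deduction goes through verbatim. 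Alternatively, one could replace $f$ by the family of elementary single-entry morphisms it dominates. Intersecting their preimages yields exactly $\bigoplus_l (f_{|_{M_l}})^{-1}(N')$, and this is what the iteration in Lemma~\ref{lem_pruning_construction} actually consumes. I would state the splitting under whichever of these hypotheses matches the later use.
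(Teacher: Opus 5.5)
Your proof of the first formula is essentially the paper's. Both compute the support of $(f_{|_{M_l}})^{-1}(N')$ pointwise, using $f_p(1)=\sum_s A^f_{l,s}e_s$ and coordinatewise membership in $N'_p$; the paper just splits this into three cases according to where $p$ lies.

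For the ``moreover'' clause, the paper's proof gives no argument at all, and your counterexample is correct, so that clause is false as stated. Take $N'_2=N_2$ and the second column of $A^f$ zero. Then $f^{-1}(N')$ is the antidiagonal $\{(x,-x)\}\cong M_1$. On the other hand, $\bigoplus_l\pi_l(f^{-1}(N'))=M_1\oplus M_2$ and $\bigoplus_l(f_{|_{M_l}})^{-1}(N')=0$, so both equalities fail. Replacing $N'_1=0$ by a nonzero proper sub-upset of $N_1$ gives the same failure, so zero summands are not the issue. You are right not to try to prove it.

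Of your two repairs, only the second works in the application. In the proof of Theorem~\ref{theorem_1}, the family $\{f_j\}_{j\in\Gamma}$ consists of all morphisms $M\to M(2\alpha)$. Columns with several nonzero entries do occur where $N'_s$ is proper. For example, take one parameter, $M_1=M_2=[0,\infty)$, $M_3=[1,\infty)$ and $\alpha=1$. The morphism with $A^f_{1,3}=A^f_{2,3}=1$ and all other entries $0$ already gives the preimage $\{(x,-x,0)\}$ of $M_{0\to 2}(M)$ at $p=1/2$. This is not a direct sum of subspaces of the $(M_l)_p$, so your first hypothesis fails.

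What is true, and what the induction there actually uses, is the statement for the intersection over all morphisms $f\colon M\to N$:
\[
\bigcap_{f} f^{-1}(N')=\bigoplus_{l=1}^r\Bigl(M_l\cap\bigcap_{s\,:\,M_l\subseteq N_s}N'_s\Bigr).
\]
The inclusion $\supseteq$ follows from the first formula and additivity, since $J^f_l\subseteq\{s : M_l\subseteq N_s\}$ for every $f$. The inclusion $\subseteq$ follows by intersecting over the single-entry morphisms $e_{l,s}$, whose preimage is $(M_l\cap N'_s)\oplus\bigoplus_{l'\neq l}M_{l'}$.

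For $N=M(2\alpha)$, the index set $\{s : M_l\subseteq M_s(2\alpha)\}$ is exactly $JM^1_l$. The right-hand side is again a direct sum of its projections, so every $I_i$ stays split. Hence the per-$f_j$ formula for $\pi_{l_1}(f_j^{-1}(I_{i-1}))$ in that proof is wrong, but its intersection over $j\in\Gamma$ is right. Theorem~\ref{theorem_1} survives once the lemma is restated in this form.
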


\begin{proof}
    By Lemma~\ref{lem_upset_submodule},
$\left(f_{|_{M_l}}\right)^{-1}(N')$ is an upset module, since it is a submodule of the upset module~$M_l$.
Now let us find its support:\begin{itemize}[leftmargin=*]
 \item If~$p \notin \operatorname{supp}(M_l)$ then~$(M_l)_p = 0$ therefore~$(\left(f|_{M_l}\right)^{-1}(N'))_p$=0.
\item If~$p \in \operatorname{supp}(M_l)$ but~$p \notin \bigcap_{s \in J_l^f} \operatorname{supp}(N'_s)$, then there exists~$s \in J_l^f$ such that~$p \notin \operatorname{supp}(N'_s)$.  
This implies that~$(f_l^s)_p(1)) = A_{l,s}^f \neq 0$ and~$(N'_s)_p = 0$, it follows that~$(f{|_{M_l}})_p(1) \notin N'_p$.  
Because~$(M_l)_p = k$, we get~$(\operatorname{im}(f{|_{M_l}}))_p \cap N'_p = 0$, hence~$((f{|_{M_l}})^{-1}(N'))_p = 0$.
\item If~$p \in \operatorname{supp}(M_l) \bigcap \left( \cap_{s \in J_l^f} \operatorname{supp}(N'_s) \right)
$ (if any of the~$N'_s=0$ nothing to prove).
Let~$x \in (M_l)_p,$ if~$s \notin J_l^f, f_l^s(x))=0
$ and if~$s \in J_l^f$ we have that~$(N'_s)_p=(N_s)_p$ so~$(\operatorname{im}(f{|_{M_l}}) \subset N'_p$ and~$\left((f_{|_{M_l}})_p\right)^{-1}(N')=(M_l)_p=k.$
\end{itemize}
Therefore
$
\left(f|_{M_l}\right)^{-1}(N') \;=\; M_l \bigcap \left(\bigcap_{s\in J_l^f} N'_s\right).
$ \\

\end{proof}

\subsection{Pruning of upset-decomposable modules}

In this section we prove the main result of the paper, the right-hand inequality in Conjecture~\ref{conjecture} for upset-decomposable modules. The proof proceeds by an explicit computation of the pruning of these modules.
We first translate the problem into a combinatorial setting by associating a directed graph to an upset an upset-decomposable module, which allows us to compute the pruning in a systematic way.

Let~$\alpha \ge 0$ be a fixed constant.
To compute the~$\alpha$-pruning of an upset-decomposable module
$
M=\bigoplus_{l=1}^r M_l,
$
we first associate to~$M$ an directed graph~$G(M,\alpha)$.
The vertices of~$G(M,\alpha)$ are~$\{M_1,\dots,M_r\}$, and there is an edge
$
M_{l_1} \to M_{l_2}\text{, if } M_{l_1} \subseteq M_{l_2}(2\alpha).
$ In what follows a path will mean a directed path.\\
For each~$l_1 \in \{1,\dots,r\}$, define the sets of reachable indices from~$l_1$:
\begin{itemize}
\item~$
JM_{l_1}^0 = \{l_1\},
$

\item~$
JM_{l_1}^u =
\bigl\{\,l \in \{1,\dots,r\}
\;\big|\;
\exists\text{ path of length } \le u
\text{ from }M_{l_1}\text{ to }M_l
\bigr\},\forall u \in \{1,\dots,r-1\},
$
\item~$JM_{l_1} =\bigl\{ l \in \{1, \dots, r\} \;\big|\;\exists \text{ path from } M_{l_1} \text{ to } M_l \bigr\}.
$

\end{itemize}
We write~$M_{l_1} \to^* M_l$ to indicate the existence of such a path.
We notice that 
$
JM_{l_1} = JM_{l_1}^{r-1},
$
since all paths without repeated vertices have length at most~$r-1$.\\
For~$S \subseteq \{1,\dots,r\}$, we define
$
JM_S \coloneqq \bigcup_{l \in S} JM_l .
$

    \begin{theorem}\label{theorem_1}
Let~$\alpha \geq 0$ and let~$M = \bigoplus_{l=1}^r M_l$ be an upset-decomposable module. Then
\[
\pi_{l_1}(\operatorname{Pru}_\alpha(M)) = \bigcap_{l \in JM_{l_1}} M_l(-\alpha),
\]
where~$JM_{l_1}$ is as defined above.
\end{theorem}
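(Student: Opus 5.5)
The plan is to compute the $\alpha$-pruning pair $(I,K)$ of $M$ directly via the iterative description of Lemma~\ref{lem_pruning_construction} with $\delta=\alpha$. I will show that $K=0$ and that $I=\bigoplus_{l}\bigcap_{s\in JM_l}M_s$. Shifting by $-\alpha$ then gives the statement, since shifts commute with intersections of supports: $\bigl(\bigcap_s M_s\bigr)(-\alpha)=\bigcap_s M_s(-\alpha)$.

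\textbf{Step 1 (morphisms $M\to M(2\alpha)$).} By Lemma~\ref{lem_upset_decomposable_map} and Lemma~\ref{lem_upset_map2}, every $f\colon M\to M(2\alpha)=\bigoplus_s M_s(2\alpha)$ is given by a matrix $A^f$. The entry $A^f_{l,s}$ may be nonzero only if $M_l\subseteq M_s(2\alpha)$, that is, only if there is an edge $M_l\to M_s$ in $G(M,\alpha)$. Conversely, the morphism $f^{\ast}$ with $A^{f^\ast}_{l,s}=1$ on every edge (and $0$ elsewhere) exists. Consequently $\bigcup_{f}J_l^f=\{s : M_l\to M_s\}$, and this union is attained by $f^\ast$. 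Note also that $M_l\to M_l$ is always an edge, by Lemma~\ref{lem_shift_injection}.

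\textbf{Step 2 (computing $I$).} Assume inductively that $I_{i-1}=\bigoplus_s (I_{i-1})_s$ with each $(I_{i-1})_s$ an upset contained in $M_s$. The map $M_{0\to2\alpha}$ sends the point $p$ to the point $p$ of $M(2\alpha)$ and is injective (Lemma~\ref{lem_shift_injection}). Hence $M_{0\to2\alpha}(I_{i-1})$ is the submodule $\bigoplus_s N'_s$ of $M(2\alpha)$ with $\operatorname{supp}N'_s=\operatorname{supp}(I_{i-1})_s$. This is the one place where I will be careful about shift conventions. Lemma~\ref{lem_upset_decomposable_inverse_image} then gives
\[
f^{-1}\bigl(M_{0\to2\alpha}(I_{i-1})\bigr)=\bigoplus_l M_l\cap\bigcap_{s\in J^f_l}(I_{i-1})_s .
\]
The intersection over all $f$ is computed componentwise; by Step 1 it is realized by $f^\ast$. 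This yields
\[
(I_i)_l=M_l\cap\bigcap_{M_l\to M_s}(I_{i-1})_s .
\]
Induction on $i$ then gives $(I_i)_l=\bigcap_{s\in JM_l^i}M_s$: a path of length $\le i$ from $l$ is an edge followed by a path of length $\le i-1$, and the loop $l\to l$ absorbs the term $M_l$. For $i\ge r-1$ this stabilizes at $\bigcap_{s\in JM_l}M_s$, using $JM_l=JM_l^{r-1}$. Hence $\pi_l(I)=\bigcap_{s\in JM_l}M_s$.

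\textbf{Step 3 (computing $K$).} We have $K_0=0$, so $K_1=\sum_f I^{-1}_{0\to2\alpha}(0)=\Ker(I_{0\to2\alpha})$. The map $I_{0\to2\alpha}$ is the restriction of the injective map $M_{0\to2\alpha}$ to the submodule $I$, so it is injective and $K_1=0$. Inductively every $K_i=0$, and therefore $K=0$. Thus $\operatorname{Pru}_\alpha(M)=I(-\alpha)$, and projecting onto the $l_1$-th summand gives the claim.

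I expect the main obstacle to be bookkeeping rather than anything deep. One point is identifying $M_{0\to2\alpha}(I_{i-1})$ correctly as a submodule of $M(2\alpha)$, so that Lemma~\ref{lem_upset_decomposable_inverse_image} applies with the right $N'_s$. The other is justifying that the intersection over all $f$ of direct-sum preimages is again computed summand by summand. The observation that $K$ vanishes, because upsets never die, is what makes the whole computation collapse.
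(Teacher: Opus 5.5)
Your route is the paper's: $K=0$ because the shift maps of $M$, and hence of $I\subseteq M$, are injective. $I$ is then computed inductively from Lemma~\ref{lem_upset_decomposable_inverse_image}, using that the sets $J^f_l$ together cover the out-neighbours of $l$ in $G(M,\alpha)$, and the induction stabilizes at $r-1$.

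One step is false as written. This is the formula
\[
f^{-1}\bigl(M_{0\to2\alpha}(I_{i-1})\bigr)=\bigoplus_l M_l\cap\bigcap_{s\in J^f_l}(I_{i-1})_s
\]
for a single morphism $f$, which is the ``moreover'' clause of Lemma~\ref{lem_upset_decomposable_inverse_image}; the paper's proof leans on it too. Your claim that the intersection over all $f$ is realized by $f^\ast$ fails with it. The preimage under one $f$ need not be compatible with the decomposition. Take $d=1$ and $\alpha=\tfrac12$, with $M_1=M_2$ supported on $[0,\infty)$ and $M_3$ supported on $[1,\infty)$. Then $G(M,\tfrac12)$ is complete and $A^{f^\ast}$ is the all-ones matrix. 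Put $N'=M_{0\to1}(M)$ and look at $p=\tfrac12$. There $M_p=k\oplus k\oplus 0$ and $N'_p=k\oplus k\oplus 0$, while $f^\ast_p(x_1,x_2,0)=(x_1+x_2)(1,1,1)$. So $(f^\ast)^{-1}(N')_p$ is the antidiagonal $\{(x,-x,0)\}$. The right-hand side above is zero at $p$, and so is the true $I_1$, since all its components have support $[1,\infty)$. The difficulty you anticipated is real, but it sits one step earlier than you placed it. Intersecting compatible direct sums is trivially componentwise; the problem is that individual preimages are not compatible direct sums.

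The repair is short and uses that you intersect over \emph{all} morphisms. Write $N'=M_{0\to2\alpha}(I_{i-1})$. For any $f$ and any $l$, the composite $f\circ\iota_l\pi_l$ is again a morphism $M\to M(2\alpha)$, and
\[
(f\circ\iota_l\pi_l)^{-1}(N')=\pi_l^{-1}\bigl((f|_{M_l})^{-1}(N')\bigr).
\]
Hence $x\in\bigcap_f f^{-1}(N')$ forces $\pi_l(x)\in\bigcap_f (f|_{M_l})^{-1}(N')$ for every $l$. The reverse inclusion holds because each $(f|_{M_l})^{-1}(N')$ lies in $f^{-1}(N')$ and preimages are submodules. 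This gives $I_i=\bigoplus_l\bigcap_f (f|_{M_l})^{-1}(N')$, which needs only the first, correct, part of Lemma~\ref{lem_upset_decomposable_inverse_image}. Equivalently, $I_i=\bigcap_l (f^\ast\circ\iota_l\pi_l)^{-1}(N')$, so replace ``realized by $f^\ast$'' with ``realized by the family $\{f^\ast\circ\iota_l\pi_l\}_l$''. With this change your induction on $(I_i)_l$ and your Step~3 go through unchanged, and the same fix repairs the paper's step $\pi_{l_1}(I_i)=\bigcap_j\pi_{l_1}\bigl(f_j^{-1}(\cdots)\bigr)$.
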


\begin{proof}
To compute the~$\alpha$-pruning, we use the construction in the proof of Lemma~\ref{lem_pruning_construction}, i.e., we compute the modules~$K$ and~$I$.  
Since~$M$ is upset-decomposable, we get that~$I$ is also upset-decomposable by Lemma~\ref{lem_upset_submodule}. \\ 
By Lemma~\ref{lem_shift_injection}, the shift morphism of~$I$ is injective. Therefore,
\[
K_1 = \sum_{j \in \Gamma} (I_{0 \to 2\alpha})^{-1}(f_j(0)) = 0=K_0,
\]
with~$\{f_j\}_{j \in \Gamma}$ being the set of morphisms from~$M$ to~$M(2\alpha)$. So~$K = 0$.
Hence, we have 
\[
\operatorname{Pru}_\alpha(M) = I(-\alpha).
\]
Next, we compute~$I$. For that we need to compute~$I_i$ for every i.
We claim that: \[
\pi_{l_1}(I_i)= \bigcap_{l \in JM_{l_1}^i} M_l.
\]
For~$i$=0, the claims hold.
Suppose by induction that it is true for~$i-1$ let us prove it for~$i$.
\[\pi_{l_1}(I_{i}) = \bigcap_{j \in \Gamma} \pi_{l_1}(f_j^{-1}(M_{0 \to 2\alpha}(I_{i-1}))).\]
By Lemma~\ref{lem_shift_injection},~$\pi_l(M_{0 \to 2\alpha}(I_{i-1})) \cong \pi_l(I_{i-1}),\forall l \in \{1,2,...,r\}.$ 
Using Lemma~\ref{lem_upset_decomposable_inverse_image}
 \[\pi_{l_1}(f_j^{-1}(I_{i-1}))=\pi_{l_1}(I_{i-1}) \bigcap
\left(
\bigcap_{v \in J_{l_1}^{f_j}} \pi_l(I_{i-1})
\right)\]
 Since~$\{f_j\}_{j \in \Gamma}$ is the set of all possible morphisms from~$M$ to~$M(2\alpha)$ then~$\bigcup_{j \in \Gamma} J_{l_1}^{f_j} = JM_{l_1}^1$ and we have that~$l_1 \in JM_{l_1}^1$ therefore
 \[
\pi_{l_1}(I_{i}) = \bigcap_{j \in \Gamma} \pi_{l_1}(f_j^{-1}(I_{i-1})) = \bigcap_{v \in JM_{l_1}^1} \pi_v(I_{i-1})
 \]
 So using the induction hypothesis we get that:
 \[
\pi_{l_1}(I_{i}) = \bigcap_{v \in JM_{l_1}^1} \left(\bigcap_{l \in JM_v^{i-1}} M_l \right)= \bigcap_{l \in JM_{l_1}^i} M_l.
 \]
The last equality came from the fact that~$l$ being at distance less than~$i-1$ (pathwise) of from a point~$v$ that is at distance less than 1 from~$l_1$ is the same as~$l$ being at a distance less that~$i$ from~$l_1$.
So \[\pi_{l_1}(I)=\bigcap_{i=0}^\infty \pi_{l_1}(I_i)=\bigcap_{l \in \bigcup_{i=0}^{r-1} JM_{l_1}^i} M_l=\bigcap_{l \in JM_{l_1}} M_l\]
So we get:
\[
\pi_{l_1}(\operatorname{Pru}_\alpha(M)) =\pi_{l_1}(I)(-\alpha)=  \bigcap_{l \in JM_{l_1}} M_l(-\alpha).
\]
\end{proof}

Let~$G$ be a directed graph with~$|V(G)| < \infty$. Let~$A,B \subseteq V(G).$
For~$a,b \in V(G)$, we let~$l_G(a,b)$ be the length of the shortest path from~$a$ to~$b$,
with~$l_G(a,a)=0$ and~$l_G(a,b)=\infty$ if no path from~$a$ to~$b$ exists. Set
\[
d_G(A,B):=\max_{b\in B}\min_{a\in A} l_G(a,b).
\]
In other words, it is the shortest length of path needed to reach every vertex in B starting from some vertex in A.
Clearly,~$d_G(A,B) \neq d_G(B,A)$ in general.

\begin{lemma}\label{lem_graph_1}
Let~$A\subseteq V(G)$ and set
\[
B:=\{\,b\in V(G)\mid d_G(A,\{b\})<\infty\,\},
\]
i.e.\ the set of vertices reachable from~$A$. Then
\[
d_G(A,B)\le |B|-|A|,
\]
where~$|\cdot|$ denotes cardinality.
\end{lemma}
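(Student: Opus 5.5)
We need to show that every vertex of $B$ is reachable from $A$ by a path of length at most $|B|-|A|$. Note $A\subseteq B$, since $l_G(a,a)=0$. If $A=\emptyset$ then $B=\emptyset$ and the max over an empty set is taken as $0$ by convention, so assume $A\neq\emptyset$. The plan is a layering (breadth-first search) argument. For $u\ge 0$, let
\[
B_u:=\{\,b\in V(G)\mid \min_{a\in A} l_G(a,b)=u\,\}.
\]
Then $B_0=A$, the $B_u$ are pairwise disjoint, and $B=\bigcup_{u\ge 0}B_u$. Set $D:=d_G(A,B)$, which is finite because every $b\in B$ has finite distance from $A$ and $B$ is finite. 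By definition of $D$, $B_D\neq\emptyset$.

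The key step is to show that $B_u\neq\emptyset$ for every $0\le u\le D$. Take $b\in B_D$ and a shortest path $a=x_0\to x_1\to\cdots\to x_D=b$ with $a\in A$ realizing $\min_{a\in A}l_G(a,b)=D$. Each vertex $x_u$ satisfies $\min_{a'\in A}l_G(a',x_u)=u$, so $x_u\in B_u$. Indeed, $x_u$ is at distance at most $u$ via the initial segment of the path. If some $a'\in A$ reached $x_u$ in fewer than $u$ steps, concatenating with the tail $x_u\to\cdots\to x_D$ would reach $b$ in fewer than $D$ steps, contradicting the minimality of $D$.

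Disjointness then gives
\[
|B|\ \ge\ |B_0|+\sum_{u=1}^{D}|B_u|\ \ge\ |A|+D,
\]
which is exactly $d_G(A,B)\le |B|-|A|$.

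The only delicate point is the subpath argument, namely that intermediate vertices of a shortest path from the set $A$ sit exactly in the intermediate layers. This is the standard property that prefixes of shortest paths are shortest, adapted to a set of sources, and it is handled by the concatenation argument above. The empty-set conventions for $\max$ also need a one-line check.
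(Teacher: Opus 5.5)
Your proof is correct and takes essentially the same route as the paper. Both take a shortest path from $A$ to a vertex realizing $d_G(A,B)$ and count its vertices after the first as distinct elements of $B\setminus A$; you get distinctness and $x_u\notin A$ from the disjoint BFS layers $B_u$, while the paper argues by contradiction that a repeated vertex or a vertex of $A$ on the path would shorten it.
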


\begin{proof}
Assume for contradiction that~$d_G(A,B)=c>|B|-|A|$, so there exists
$b_1\in B$ with~$\min_{a\in A} l_G(a,b_1)=c$. Then for every~$a\in A$ we have
$l_G(a,b_1)\ge c$, and there is~$a_1\in A$ such that~$l_G(a_1,b_1)=c$. Let
\[
P: a_1=v_0\to v_1\to\cdots\to v_{c-1}\to v_c=b_1
\]
be a path of length~$c$. The~$c-1$ intermediate vertices~$v_1,\dots,v_{c-1}$
are pairwise distinct (otherwise the path could be shortened).
In addition~$b_1$ cannot be an intermediate vertex or else the path could again be shortened.
None of the intermediate
vertices lies in~$A$, since if~$v_j\in A$ then~$l_G(v_j,b_1)\le c-j<c$,
so we will get an element in~$A$ with
a path to~$b_1$  strictly shorter than c.\\
Each intermediate vertex is clearly in~$B$. Thus the
intermediate vertices form a subset of~$B\setminus (A\cup \{b_1\})$.
If~$b_1\in A$ then~$c=0$, contradicting~$c>|B|-|A|$.
If~$b_1 \notin A$ we get that~$|B\setminus (A\cup \{b_1\})|=|B|-|A|-1$. Therefore
\[
c-1 \le |B|-|A|-1,
\]
so~$c\le |B|-|A|$, contradiction. Hence~$d_G(A,B)\le |B|-|A|$.
\end{proof}

\begin{lemma}\label{lem_graph_2}
Let~$M=\bigoplus_{l=1}^r M_l$ be an upset-decomposable module and let~$G=G(M,\alpha)$ be the directed graph associated to~$M$ for some~$\alpha\ge0$ as defined above.
Then \[l_G(M_{l_1},M_{l_2}) \leq c   \quad\Longrightarrow\quad M_{l_1} \subseteq M_{l_2}(2c\alpha). \]

\end{lemma}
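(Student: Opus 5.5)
We argue by induction on the length of a shortest path, using only the definition of the edges of $G(M,\alpha)$ and the monotonicity of shifts of upset modules from Lemma~\ref{lem_shift_injection}.

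\emph{Reduction to exact length.} Suppose $l_G(M_{l_1},M_{l_2})=c'\le c$. It suffices to show $M_{l_1}\subseteq M_{l_2}(2c'\alpha)$. Indeed, since $2c'\alpha\le 2c\alpha$, Lemma~\ref{lem_shift_injection} gives $M_{l_2}(2c'\alpha)\subseteq M_{l_2}(2c\alpha)$, and the claim for $c$ follows. The case $c=\infty$ is vacuous, so we may assume a path of length exactly $c'$ exists.

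\emph{Induction on $c'$.} For $c'=0$ we have $l_1=l_2$, and $M_{l_1}\subseteq M_{l_1}(0)$ trivially. For $c'=1$ the statement is exactly the definition of an edge $M_{l_1}\to M_{l_2}$. In general, take a path
\[
M_{l_1}=v_0\to v_1\to\cdots\to v_{c'}=M_{l_2}.
\]
Each edge gives $v_j\subseteq v_{j+1}(2\alpha)$.

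\emph{Shift compatibility.} We need that inclusion of supports is preserved by shifting: if $U\subseteq V(2\alpha)$, then $U(\beta)\subseteq V(2\alpha+\beta)$. This holds because $\operatorname{supp}(U(\beta))=\Pb_\beta^{-1}(\operatorname{supp} U)$, and $\Pb_\beta\circ\Pb_{2\alpha}=\Pb_{2\alpha+\beta}$ by the axioms on the shift function.

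\emph{Chaining.} Applying this step by step along the path gives
\[
v_0\subseteq v_1(2\alpha)\subseteq v_2(4\alpha)\subseteq\cdots\subseteq v_{c'}(2c'\alpha).
\]
Since inclusion of supports is transitive, we conclude $M_{l_1}\subseteq M_{l_2}(2c'\alpha)$.

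None of these steps is hard. The only point needing care is the shift compatibility, namely checking that shifts preserve inclusions of upsets and compose additively; this is immediate from the listed properties of $\Pb_\varepsilon$.
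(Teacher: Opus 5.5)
Your proof is correct and is essentially the paper's argument: take a path of length $c'\le c$, chain the edge containments to get $M_{l_1}\subseteq M_{l_2}(2c'\alpha)$, then enlarge the shift to $2c\alpha$ using Lemma~\ref{lem_shift_injection}. Your explicit check that $U\subseteq V(2\alpha)$ implies $U(\beta)\subseteq V(2\alpha+\beta)$ fills in a step the paper uses without comment.
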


\begin{proof}
Assume~$l_G(M_{l_1},M_{l_2})\le c$. Then there exists a path
\[
P:\; M_{l_1}=M_{l'_0}\to M_{l'_1}\to\cdots\to M_{l'_{c'-1}}\to M_{l'_{c'}}=M_{l_2},
\]
with~$c' \le c.$ 
Then by the construction of~$G$ each arrow gives a containment
shifted by~$2\alpha$, hence \[M_{l_1} \subseteq M_{l'_1}(2\alpha) \subseteq M_{l'_2}(4\alpha) \subseteq\cdots\subseteq M_{l'_{c'-1}}(2(c'-1)\alpha) \subseteq M_{l_2}(2c'\alpha).\]
Since~$c' \leq c$ we have~$M_{l_2}(2c'\alpha)\subseteq M_{l_2}(2c\alpha)$ by Lemma~\ref{lem_shift_injection}, which proves the claim.

\end{proof}

\begin{lemma}\label{lem_graph_3}
 Let~$N$ be an upset module, let~$M=\bigoplus_{l=1}^r M_l$ be an upset-decomposable module and let~$G=G(M,\alpha)$ be the directed graph associated to~$M$ for some~$\alpha\ge0$.
Let~$A,B \subseteq V(G)$ as in Lemma~\ref{lem_graph_1}, then:
\[
\forall M_a \in A,\; N \subseteq M_a \quad\Longrightarrow\quad \forall M_b \in B, \; N \subseteq M_b(2(|B|-|A|)\alpha).
\]
\end{lemma}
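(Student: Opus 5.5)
The plan is to combine Lemma~\ref{lem_graph_1} and Lemma~\ref{lem_graph_2}, and then use transitivity of inclusion of upsets together with the monotonicity of shifts from Lemma~\ref{lem_shift_injection}.

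Fix $M_b \in B$. By definition of $B$, there is some $M_a \in A$ with $l_G(M_a,M_b) = \min_{a'\in A} l_G(M_{a'},M_b) \le d_G(A,B)$. Lemma~\ref{lem_graph_1} gives $d_G(A,B)\le |B|-|A|$, so $l_G(M_a,M_b)\le c$ with $c:=|B|-|A|$. If $B$ is empty the statement is vacuous, so we may assume $b$ exists. Note that $c\ge 0$, since $A\subseteq B$: every vertex of $A$ is reachable from $A$ by the empty path.

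Next, apply Lemma~\ref{lem_graph_2} with this $c$ to obtain $M_a \subseteq M_b(2c\alpha)$. The hypothesis gives $N\subseteq M_a$. Inclusion of supports is transitive, so $N\subseteq M_b(2c\alpha) = M_b(2(|B|-|A|)\alpha)$, which is the claim.

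The only delicate point is bookkeeping rather than mathematics. We need to make sure the minimum over $A$ is attained by an actual vertex $M_a\in A$, which holds because $A$ is finite and nonempty whenever $B$ is nonempty. We also need that the inclusion notation $\subseteq$ for upset modules, meaning inclusion of supports, is compatible with shifts: if $U\subseteq V$ then $U(\varepsilon)\subseteq V(\varepsilon)$. This follows immediately from $\operatorname{supp}(V(\varepsilon))=\Pb_\varepsilon^{-1}(\operatorname{supp}V)$. I expect no real obstacle here; the substantive work was already done in Lemma~\ref{lem_graph_1}.
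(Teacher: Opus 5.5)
Your proof is correct and follows the paper's argument: pick $M_a\in A$ minimizing $l_G(\cdot,M_b)$, bound that length by $d_G(A,B)\le |B|-|A|$ via Lemma~\ref{lem_graph_1}, apply Lemma~\ref{lem_graph_2}, and chain with $N\subseteq M_a$. Your extra bookkeeping ($A$ nonempty when $B$ is, and $c\ge 0$ since $A\subseteq B$) is a harmless improvement; the shift-compatibility remark is not actually needed, because Lemma~\ref{lem_graph_2} already gives the shifted inclusion $M_a\subseteq M_b(2c\alpha)$ directly.
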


\begin{proof}
Let~$M_b \in B.$ Then  
\[d_G(A,\{M_b\}) \leq d_G(A,B) \leq |B|-|A| \text{ by Lemma~\ref{lem_graph_1}.}\]
\begin{align*}
&\implies \min_{M_a \in A}l_G(M_a,M_b) \leq |B|-|A|\\
&\implies \exists M_{a_1} \in A \mid l_G(M_{a_1},M_b) \leq |B|-|A|\\
&\implies \exists M_{a_1} \in A \mid M_{a_1} \subseteq M_b(2(|B|-|A|)\alpha) \text{ by Lemma~\ref{lem_graph_2}}\\
&\implies N \subseteq M_{a_1} \subseteq M_b(2(|B|-|A|)\alpha).\\
\end{align*}
Thus the claim.
    
\end{proof}

\begin{lemma}\label{lem_infinit_refinement}
    Let~$M=\bigoplus_{l=1}^r M_l$ and~$N=\bigoplus_{l=1}^r N_l$ be upset-decomposable modules. If~$N$ is an~$\infty$-refinement of~$M$. There exists a permutation~$\sigma$ of~$\{1,\dots,r\}$ such that
\[N_{\sigma(l)} \subseteq M_l,
\forall l \in \{1,\dots,r\}.
\]
\end{lemma}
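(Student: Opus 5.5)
Unwinding Definition~\ref{def_infty_refinement}, $N$ being an $\infty$-refinement of $M$ means $N \cong \bigoplus_{l=1}^r N'_l$ with each $N'_l$ a subquotient of $M_l$. The plan is to use Krull--Schmidt (Theorem~\ref{th_barcode}) to match the summands $N'_l$ with the given summands $N_s$, and then Lemma~\ref{lem_upset_subquotient} to turn "subquotient" into "contained in".

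\textbf{Reducing the summands $N'_l$.} First I will show that each $N'_l$ is either zero or an upset module. Writing $N'_l \cong I/K$ with $K \subseteq I \subseteq M_l$, Lemma~\ref{lem_upset_submodule} makes $I$ and $K$ upset modules. The argument in the proof of Lemma~\ref{lem_upset_subquotient} (any two points of $\R^d$ have a common upper bound) then shows that if $I/K \neq 0$, then $K = 0$. Hence $N'_l \cong I$ is an upset module with $I \subseteq M_l$.

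\textbf{Matching via Krull--Schmidt.} Upset modules are indecomposable, since their endomorphism ring is $k$ by Lemma~\ref{lem_upset_map1}. Let $S=\{l : N'_l \neq 0\}$. Then $\bigoplus_{l\in S} N'_l$ and $\bigoplus_{s=1}^r N_s$ are two decompositions of $N$ into indecomposables, up to any zero summands among the $N_s$. By Theorem~\ref{th_barcode} there is a bijection between $S$ and the set of indices $s$ with $N_s \neq 0$ matching isomorphic summands. Two upset modules are isomorphic exactly when their supports coincide: by Lemma~\ref{lem_upset_map2} a nonzero morphism forces containment of supports, and an isomorphism gives containment in both directions. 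So each nonzero $N_s$ is matched to a distinct $l\in S$ with $N_s = N'_l \subseteq M_l$.

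\textbf{Handling zero summands.} If some $N_s$ are zero summands (the statement allows $N_s=0$ within the indexing), their number equals $r-|S|$. I extend the bijection arbitrarily to a permutation $\sigma$ of $\{1,\dots,r\}$, sending the indices $l\notin S$ to the indices $s$ with $N_s=0$. For those pairs the containment $0 \subseteq M_l$ is trivial. This gives $N_{\sigma(l)} \subseteq M_l$ for all $l$.

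\textbf{Main obstacle.} The delicate step is the bookkeeping around zero summands and the claim that isomorphism of upset modules is equality of supports. The Krull--Schmidt application itself is routine, but care is needed because the paper's convention that $M \subseteq N$ means containment of supports depends on identifying isomorphic upset modules.
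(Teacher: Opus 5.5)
Your first step does not hold. You claim that the argument in the proof of Lemma~\ref{lem_upset_subquotient} shows that a nonzero quotient $I/K$ with $K\subseteq I\subseteq M_l$ forces $K=0$. That argument reaches its contradiction only because the quotient is \emph{assumed} to be an upset module, so that $p_3\ge p_1$ puts $p_3$ in its support. Without that hypothesis, the fact that $(I/K)_{p_3}=0$ contradicts nothing.

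Here is a concrete counterexample. Take $I=M_l$ with support $\{p\ge 0\}$, and $K$ the upset submodule supported on $\{p\ge(1,\dots,1)\}$. Then $K\neq 0$, and $I/K\neq 0$ is the interval module on $\{p\ge 0\}\setminus\{p\ge(1,\dots,1)\}$, which is not an upset.

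In $\R^2$ a subquotient of an upset module can even be decomposable. Take $\operatorname{supp}(I)=\{x\ge0,y\ge5\}\cup\{x\ge5,y\ge0\}$ and $\operatorname{supp}(K)=\{x\ge1,y\ge5\}\cup\{x\ge5,y\ge1\}$. This leaves the support $\{0\le x<1,\,y\ge5\}\cup\{x\ge5,\,0\le y<1\}$, whose two pieces contain no mutually comparable points. So your later claim that $\bigoplus_{l\in S}N'_l$ is a decomposition into indecomposables is also unjustified at that point.

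The missing idea is that the upset property of $N'_l$ has to come from the hypothesis on $N$, not from $M_l$.
\begin{itemize}
\item Each $N'_l$ is pfd and is, up to isomorphism, a direct summand of $N=\bigoplus_s N_s$.
\item Decompose each $N'_l$ into indecomposables via Theorem~\ref{th_barcode}. By uniqueness, every indecomposable summand of $N'_l$ is isomorphic to some $N_s$, hence is an upset module.
\item Since $\dim(N'_l)_p\le 1$ for all $p$, and any two nonzero upset modules in $\R^d$ share a point of support (a common upper bound), $N'_l$ is either $0$ or a single upset module.
\end{itemize}
Only now does Lemma~\ref{lem_upset_subquotient} apply, giving $N'_l\subseteq M_l$. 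Your Krull--Schmidt matching and your bookkeeping for zero summands then go through unchanged. The paper's own proof invokes Lemma~\ref{lem_upset_subquotient} at this point in the same way. That lemma takes the upset property of the subquotient as a hypothesis, so the argument above is needed there as well.
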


\begin{proof}
Let~$N$ be an~$\infty$-refinement of~$M$. By definition there is a decomposition
\[
N \cong \bigoplus_{l=1}^r N'_l
\]
such that each~$N'_l$ is a subquotient~$M_l$. 
By Lemma~\ref{lem_upset_subquotient} each~$N'_l$ is an upset module so it is indecomposable and~$N'_l \subseteq M_l$. Since~$N'_l$ is indecomposable and nonzero, by Theorem~\ref{th_barcode} there exists a permutation~$\sigma$ of~$\{1,\dots,r\}$ such that~$N'_l \cong N_{\sigma(l)}.$ Thus~$N_{\sigma(l)} \subseteq M_l$.
\end{proof}

\begin{lemma}\label{lem_pruning_inclusion}
    Let~$M=\bigoplus_{l=1}^r M_l$ be upset-decomposable and let~$G=G(M,\alpha)$ be the directed graph associated to~$M$ for some~$\alpha\ge0$. If~$l_2 \in JM_{l_1}$ then~$\pi_{l_1}(\operatorname{Pru}_\alpha(M)) \subseteq \pi_{l_2}(\operatorname{Pru}_\alpha(M))$
    
\end{lemma}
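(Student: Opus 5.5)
The plan is to read the inclusion directly off the formula in Theorem~\ref{theorem_1}. By that theorem,
\[
\pi_{l_1}(\operatorname{Pru}_\alpha(M)) = \bigcap_{l \in JM_{l_1}} M_l(-\alpha)
\quad\text{and}\quad
\pi_{l_2}(\operatorname{Pru}_\alpha(M)) = \bigcap_{l \in JM_{l_2}} M_l(-\alpha).
\]
An intersection over a larger index set is smaller. It therefore suffices to show the purely combinatorial containment $JM_{l_2} \subseteq JM_{l_1}$ whenever $l_2 \in JM_{l_1}$.

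For the combinatorial step, let $l \in JM_{l_2}$. Then there is a directed path $M_{l_2} \to^* M_l$ in $G(M,\alpha)$. Since $l_2 \in JM_{l_1}$, there is also a directed path $M_{l_1} \to^* M_{l_2}$. Concatenating the two gives a directed walk from $M_{l_1}$ to $M_l$. If repeated vertices need to be removed, we shortcut the walk to a path, whose length is then at most $r-1$. Hence $l \in JM_{l_1}$, which proves $JM_{l_2}\subseteq JM_{l_1}$. The degenerate cases $l_2=l_1$, and paths of length zero (using $JM^0_{l}=\{l\}$), are covered by the same argument.

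Finally, we conclude at the level of supports. Each $M_l(-\alpha)$ is an upset module, and the intersection notation in the paper denotes the upset module supported on the intersection of the supports. So
\[
\bigcap_{l \in JM_{l_1}} M_l(-\alpha) \subseteq \bigcap_{l \in JM_{l_2}} M_l(-\alpha),
\]
because the left-hand side is an intersection over a superset of indices. This is exactly the claimed inclusion $\pi_{l_1}(\operatorname{Pru}_\alpha(M)) \subseteq \pi_{l_2}(\operatorname{Pru}_\alpha(M))$, in the sense of the convention $\subseteq$ for upset modules.

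There is no real obstacle. The only point needing care is the transitivity of reachability: $JM_{l_1}$ is defined as the set of all vertices reachable by some path, not by paths of bounded length, and this is what makes the concatenation argument go through.
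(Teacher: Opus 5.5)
Your proposal is correct and is essentially the paper's proof: transitivity of $\to^*$ gives $JM_{l_2}\subseteq JM_{l_1}$. The formula of Theorem~\ref{theorem_1} then turns this into the inclusion of the corresponding intersections $\bigcap_{l\in JM_{l_1}} M_l(-\alpha)\subseteq\bigcap_{l\in JM_{l_2}} M_l(-\alpha)$.
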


   \begin{proof}
       By transitivity of~$\to^*$ we get~$JM_{l_2} \subseteq JM_{l_1}$ so \[\pi_{l_1}(\operatorname{Pru}_\alpha(M)) = \bigcap_{l \in JM_{l_1}} M_l(-\alpha) \subseteq \bigcap_{l \in JM_{l_2}} M_l(-\alpha)=\pi_{l_2}(\operatorname{Pru}_\alpha(M)),\] which proves the lemma.
   \end{proof}

   We now state and prove a main result of this paper, which proves the upper bound in Conjecture~\ref{conjecture} for the class of upset-decomposable modules. We improve the constant from~$2r$ to~$2r-1$ and strengthen the statement by replacing the interleaving distance~$d_I$ with the bottleneck distance~$d_B$; since~$d_I \leq d_B$, the conjectured bound for~$d_I$ follows immediately.
   
   We also show that this bound is optimal for both~$d_I$ and~$d_B$ in Example~\ref{ex_2}.
   
   \begin{theorem}\label{theorem_main}
       Let~$M$ and~$N$ be pfd upset-decomposable modules with ~$r$ =~$\operatorname{supdim}$~$M < \infty$ then
       \[d_B(M,N) \leq (2r-1)\,d_P(M,N).\]
   \end{theorem}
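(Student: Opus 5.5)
We fix $\alpha > d_P(M,N)$. It suffices to produce a $(2r-1)\alpha$-matching between $M$ and $N$ and then let $\alpha \downarrow d_P(M,N)$.

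\textbf{Setup.} Taking $\delta=0$ in Definition~\ref{def_pruning_distance}, and using $\Pru_0 \cong \mathrm{id}$, we get two facts: $\Pru_\alpha(M)$ is an $\infty$-refinement of $N$, and $\Pru_\alpha(N)$ is an $\infty$-refinement of $M$. Write $M=\bigoplus_{l=1}^r M_l$. All upsets meet far out in $\R^d$, so $\operatorname{supdim} M$ equals the number of summands. By Theorem~\ref{theorem_1}, each summand of a pruning is a nonempty intersection of shifted upsets, so it is nonzero. Hence $N=\bigoplus_{l=1}^r N_l$ also has exactly $r$ summands. For upset modules $Q$ and $Q'$, Lemma~\ref{lem_upset_map2} shows that $Q\sim_\varepsilon Q'$ holds if and only if $Q\subseteq Q'(\varepsilon)$ and $Q'\subseteq Q(\varepsilon)$, and a nonzero upset module is never $\varepsilon$-interleaved with $0$. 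So the goal is a bijection $\rho$ with $M_m\subseteq N_{\rho(m)}((2r-1)\alpha)$ and $N_{\rho(m)}\subseteq M_m((2r-1)\alpha)$ for every $m$.

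\textbf{One-sided containments.} By Lemma~\ref{lem_infinit_refinement} and Theorem~\ref{theorem_1}, there are permutations $\sigma,\tau$ with
\[
\bigcap_{k\in JM_{\sigma(l)}} M_k \subseteq N_l(\alpha), \qquad \bigcap_{k\in JN_{\tau(m)}} N_k \subseteq M_m(\alpha).
\]
Lemma~\ref{lem_graph_2} gives $N_{\tau(m)}\subseteq N_k(2(|JN_{\tau(m)}|-1)\alpha)$ for every $k\in JN_{\tau(m)}$. Applying this for all such $k$ at once (the content of Lemma~\ref{lem_graph_3} with $A=\{N_{\tau(m)}\}$ and $B=JN_{\tau(m)}$), we get
\[
N_{\tau(m)}\subseteq M_m\bigl((2|JN_{\tau(m)}|-1)\alpha\bigr)\subseteq M_m((2r-1)\alpha).
\]
The same argument with the roles of $M$ and $N$ exchanged gives $M_{\sigma(l)}\subseteq N_l((2r-1)\alpha)$. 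So each direction has a bijection, but the two bijections $\tau$ and $\sigma^{-1}$ need not agree.

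\textbf{Reconciling the two bijections.} This is the step I expect to be the main obstacle. I would study the permutation $\rho=\sigma\circ\tau$ on the indices of $M$. Chaining the two displayed containments shows that $\bigcap_{k\in JM_{\rho(m)}}M_k$ lies in $M_m$ shifted by a controlled amount. Hence, up to replacing single summands by intersections over reachable sets, $M_{\rho(m)}$ plays the role of an edge predecessor of $M_m$ in a graph of type $G(M,\alpha)$.

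Following an orbit of $\rho$, which is a cycle, I would use Lemma~\ref{lem_graph_1} and Lemma~\ref{lem_graph_3}. Take $A$ to be the set of summands met along the orbit and $B$ its reachable set. The aim is to show that all members of one orbit, together with the corresponding $N$-summands $N_{\tau(m)}$, are pairwise $(2r-1)\alpha$-close in both directions.

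The crucial counting point is that the budget $2(|B|-|A|)\alpha$ from Lemma~\ref{lem_graph_1} plus the single $\alpha$ from the pruning shift must total at most $(2r-1)\alpha$. This is why the sets are taken as large as possible, so the path-length bound is paid only once rather than once per step.

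\textbf{Conclusion.} Once this is done, I would match $M_m$ with $N_{\tau(m)}$, or with any $N$-summand in the same orbit class. Both containments then hold with shift $(2r-1)\alpha$, which yields the required $(2r-1)\alpha$-matching.

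If the orbit argument fails, the fallback is a Hall-type argument. Consider the bipartite relation "$M_m\sim_{(2r-1)\alpha}N_j$" and show, using reachable-set closures, that every set $S$ of $M$-indices is related to at least $|S|$ indices of $N$. A perfect matching then exists by Hall's theorem.
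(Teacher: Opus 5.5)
Your setup and the easy direction are correct and agree with the paper. Lemma~\ref{lem_graph_3} with $A=\{N_{\tau(m)}\}$ gives $N_{\tau(m)}\subseteq M_m((2r-1)\alpha)$, and your final matching $M_m\leftrightarrow N_{\tau(m)}$ is the one the paper uses. The gap is the other containment, $M_m\subseteq N_{\tau(m)}((2r-1)\alpha)$: you never prove it, and the intermediate claim you aim for is false. Chaining your one-sided bounds only yields $\bigcap_{k\in JM_{\rho(m)}}M_k\subseteq M_m(2r\alpha)$, which is over budget after a single step. Lemma~\ref{lem_graph_3} with $A$ equal to a $\rho$-orbit needs one upset already contained in every member of $A$, and that is exactly what is unknown. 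Worse, members of one orbit need not be $(2r-1)\alpha$-close. In one parameter, take $M_1=[0,\infty)$, $M_2=[4\alpha,\infty)$, $N_1=[\alpha,\infty)$, $N_2=[3\alpha,\infty)$, so $r=2$. Then $JM_1=\{1\}$ and $JM_2=JN_1=JN_2=\{1,2\}$, and $\sigma=\mathrm{id}$, $\tau=(1\,2)$ satisfy both of your displayed containments, so $\rho=(1\,2)$. Yet $M_1\subseteq M_2(\varepsilon)$ holds only for $\varepsilon\ge 4\alpha>3\alpha$, even though $M_m\leftrightarrow N_{\tau(m)}$ is still a $3\alpha$-matching. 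The Hall-type fallback only restates the goal, because the two-sided relation is precisely what needs an estimate.

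The missing idea, which is the paper's argument, is an alternating closure that tracks one shifted copy of $M_m$.
\begin{itemize}
\item Set $J_1=JM_m$. Then $M_m(-(2|J_1|-2)\alpha)\subseteq M_k$ for all $k\in J_1$.
\item Because $J_1$ is closed under reachability, this upgrades at cost $\alpha$ to containment in $\pi_k(\Pru_\alpha(M))=\bigcap_{s\in JM_k}M_s(-\alpha)$, hence in $N_{\sigma^{-1}(k)}$.
\item Closing up in $G(N,\alpha)$ to $J'_1=JN_{\sigma^{-1}(J_1)}$ gives $M_m(-(2|J'_1|-1)\alpha)\subseteq N_l$ for all $l\in J'_1$.
\item If $\tau(m)\notin J'_1$, cross back through $\Pru_\alpha(N)$ and $\tau$ at cost $\alpha$, then close up again in $G(M,\alpha)$.
\end{itemize}
The round-trip cost $2\alpha$ is absorbed for two reasons. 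First, $|J'_1|\ge|J_1|$. Second, $m\in J_1\setminus\tau^{-1}(J'_1)$ forces $|J_1\cup\tau^{-1}(J'_1)|\ge|J'_1|+1$. Together these restore the invariant $M_m(-(2|J_2|-2)\alpha)\subseteq M_k$ for all $k\in J_2=JM_{J_1\cup\tau^{-1}(J'_1)}$. Since $\rho^{-n}(m)\in\tau^{-1}(J'_n)$, the process stops within one $\rho$-orbit, with total shift $(2|J'_{n_0}|-1)\alpha\le(2r-1)\alpha$. Your remark that the path-length bound should be paid only once is the right intuition. What makes it work is this invariant on growing closed sets, alternating between $G(M,\alpha)$ and $G(N,\alpha)$, not a single application of Lemma~\ref{lem_graph_1} to the orbit.
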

   \begin{proof}
   If~$\operatorname{supdim}$~$N \neq r$ then both distances~$= \infty$ so nothing to prove.\\
   Suppose that~$\operatorname{supdim}$~$N = r$ i.e~$M=\bigoplus_{l=1}^r M_l$ and~$N=\bigoplus_{l=1}^r N_l.$
   Let~$\alpha \geq 0$, such that~$\operatorname{Pru}_{\alpha}(M)$ is an~$\infty$-refinement of~$N$ and~$\operatorname{Pru}_{\alpha}(N)$ is an~$\infty$-refinement of~$M$, our goal is to prove that there exists an~$(2r-1)\alpha$-matching between~$M$ and~$N$. By Lemma~\ref{lem_infinit_refinement} there exists permutations~$\sigma_1,\sigma_2$ of~$\{1,\dots,r\}$ such that\[
   \pi_{\sigma_1(l)}(\operatorname{Pru}_{\alpha}(M)) \subseteq N_l\quad\text{and}\quad\pi_{\sigma_2(l)}(\operatorname{Pru}_{\alpha}(N)) \subseteq M_l,\quad\forall\,l \in \{1,\dots,r\}.\]Define~$N' = \bigoplus_{l=1}^r N'_l$ where~$N'_l = N_{\sigma_2(l)}$, then~$N' \cong N$.  Set~$\sigma = \sigma_2^{-1} \circ \sigma_1^{-1}$, we get that
   \begin{equation}
   \pi_l(\operatorname{Pru}_{\alpha}(M)) \subseteq N'_{\sigma(l)}\quad\text{and}\quad\pi_{l}(\operatorname{Pru}_{\alpha}(N')) \subseteq M_l,\quad\forall\,l \in \{1,\dots,r\}.
   \tag{1}
\label{eq:1}
\end{equation}
Since~$N' \cong N$ having matching between~$N$ and~$M$ is equivalent to having a  matching between~$M$ and~$N'$.
From now on, to simplify the notation,~$N$ refers to~$N'$.\\
The matching will identify~$M_l$ with~$N_l$ for all~$l \in \{1,\dots,r\}$. 
Therefore it suffices to show that
\[
M_l \subseteq N_l((2r-1)\alpha)\quad\text{and}\quad
N_l \subseteq M_l((2r-1)\alpha),\quad\forall\,l \in \{1,\dots,r\}.
\]
Fix~$l_1 \in \{1,\dots,r\}$.  
In the directed graph~$G(N,\alpha)$, set
\[
A = \{N_{l_1}\},\qquad B = \{N_l : l \in JN_{l_1}\}.
\]
Lemma~\ref{lem_graph_3} gives
\[
N_{l_1} \subseteq N_l((2|JN_{l_1}| - 2)\alpha)\quad \forall\,l \in JN_{l_1}
\]
hence
\[N_{l_1} \subseteq \bigcap_{l\in JN_{l_1}} N_l((2|JN_{l_1}|-2)\alpha)\]
So by Theorem~\ref{theorem_1} and \eqref{eq:1} we get:
\[N_{l_1} \subseteq \pi_{l_1}(\operatorname{Pru}_\alpha(N))((2|JN_{l_1}|-1)\alpha)
\subseteq M_{l_1}((2|JN_{l_1}|-1)\alpha).
\]
Since~$|JN_{l_1}|\le r$ we obtain
\[
N_{l_1} \subseteq M_{l_1}((2r-1)\alpha).
\]
We use a similar argument in~$G(M,\alpha)$.  
Let~$J_1 = JM_{l_1}$.  
We get that,
\begin{equation}
\forall l \in J_1, \quad M_{l_1} \subseteq M_l((2|J_1|-2)\alpha) 
 \tag{2}
\label{eq:2}
\end{equation}
and
\[
M_{l_1} \subseteq \pi_{l_1}(\operatorname{Pru}_\alpha(M))((2|J_1|-1)\alpha) 
\]
by Lemma~\ref{lem_pruning_inclusion} and~\eqref{eq:1} we get 
\[
M_{l_1} \subseteq \pi_l(\operatorname{Pru}_\alpha(M))((2|J_1|-1)\alpha)\subseteq N_{\sigma(l)}((2|J_1|-1)\alpha) \quad \forall \, l \in J_1 
\]
Equivalently,
\[
M_{l_1}(-(2|J_1|-1)\alpha) \subseteq N_l
\quad \forall \, l \in \sigma(J_1).
\]
Continue now in~$G(N,\alpha)$ we define~$J'_1=JN_{\sigma(J_1)}$, we use Lemma~\ref{lem_graph_3} with 
\[
A = \{N_l : l \in \sigma(J_1)\},\qquad
 B=\{N_l \mid l \in J'_1\}
\]
we get
\[
M_{l_1}(-(2|J_1|-1)\alpha) \subseteq
N_l(2(|J'_1|-|J_1|)\alpha)
\quad \forall\,l \in J'_1,
\]
so
\[
M_{l_1}(-(2|J'_1|-1)\alpha) \subseteq N_l,
\quad \forall\,l \in \sigma(J'_1).
\]
If~$l_1 \in J'_1$ we conclude
$M_{l_1}(-(2|J'_1|-1)\alpha)\subseteq N_{l_1}$
and we are done, since~$|J'_1| \leq r$.  
Otherwise we carry on our computation.
    By the construction of~$J'_1$ we have for all~$l \in J'_1, JN_l \subseteq J'_1$. Thus \\\[\forall l \in J'_1, \quad M_{l_1}(-(2|J'_1|-1)\alpha) \subseteq \bigcap_{s \in JN_{l}} N_s. \]
   Therefore again using Theorem~\ref{theorem_1} and \eqref{eq:1} we get that
\begin{equation}
  \forall l \in J'_1, \quad M_{l_1}(-2|J'_1|\alpha) \subseteq \pi_{l}(\operatorname{Pru}_\alpha(N)) \subseteq M_l. 
\tag{3}
\label{eq:3}
\end{equation}
Since~$(2|J_1|-2) \leq 2|J_1| \leq 2|J'_1|$ we combine the equations \eqref{eq:2} and \eqref{eq:3} and we get that
\[\forall l \in J'_1 \cup J_1, \quad M_{l_1}(-2|J'_1|\alpha) \subseteq M_l. \]
As we assumed before~$l_1 \notin J'_1$ and we know that~$l_1 \in J_1$ so~$|J'_1 \cup J_1|-1 \geq |J'_1|$, we get that
\[\forall l \in J'_1 \cup J_1, \quad M_{l_1}(-(2|J'_1 \cup J_1|-2)\alpha) \subseteq M_l. \]
We define~$J_2=JM_{J'_1 \cup J_1}$, we use again Lemma~\ref{lem_graph_3} with~$A=\{M_l \mid l \in J'_1 \cup J_1\}$ and~$B = \{M_l \mid l\in J_2 \}$ so
\[\forall l \in J_2, \quad M_{l_1} (-(2|J'_1 \cup J_1|-2)\alpha) \subseteq M_l(2(|J_2|-|J'_1 \cup J_1|)\alpha).\] thus
\[\forall l \in J_2, \quad M_{l_1}(-(2|J_2|-2)\alpha) \subseteq M_l.\]
We define the induction~$J'_n=JM_{\sigma(J_n)}$ and whenever~$l_1 \notin J'_{n}$, we pose
$J_{n+1}=JM_{J_{n}\cup J'_{n}}$. Since ~$l_1 \in J_n \text{ but } l_1 \notin J'_n$;
we use the fact that~$|J'_n \cup J_n| -1 \geq |J'_n|$ and we use repeatedly Lemma~\ref{lem_graph_3} to get
\[\forall l \in J_n, \quad M_{l_1}((2|J_n|-2)\alpha) \subseteq M_l.\]
and
\[\forall l \in J'_n, \quad M_{l_1}((2|J'_n|-1)\alpha) \subseteq N_l.\]
We carry on with our induction until we reach~$n=n_0$, with~$n_0$ being the first integer such that~$l_1 \in J'_{n_0}$. We claim that this induction is finite and~$n_0 \leq r$. We show by induction that for every~$n$,~$\sigma^n(l_1) \in J'_n$, this is true for~$n = 1$ because~$l_1 \in J_1$ and we have that~$\sigma(J_1) \subseteq J'_1$ so~$\sigma(l_1) \in J'_1$, and if~$\sigma^n(l_1) \in J'_n$, we have that~$J'_n \subseteq J_{n+1}$ and~$\sigma(J_{n+1}) \subseteq J'_{n+1}$ so~$\sigma^{n+1}(l_1) \in J'_{n+1}$. Since~$\sigma$ is a permutation of~$\{1,\dots,r\}$, there is~$n_0 \leq r, \, \sigma^{n_0}(l_1) = l_1$, which proves our claim.
So by that we obtain
\[
M_{l_1} \subseteq N_{l_1}((2|J'_{n_0}|-1)\alpha) \subseteq N_{l_1}((2r-1)\alpha).
\]
Since~$l_1$ was arbitrary,
\[
\forall\,l \in \{1,\dots,r\},\quad
M_l \subseteq N_l((2r-1)\alpha)
\quad\text{and}\quad
N_l \subseteq M_l((2r-1)\alpha).
\]
Thus we have constructed a~$(2r-1)\alpha$-matching and we conclude that
\[
d_B(M,N) \le (2r-1)\alpha = (2r-1)d_P(M,N).
\]
\end{proof}

\begin{example}
We will give an example that helps us visualize the proof. Let~$M=\bigoplus_{l=1}^r M_l$ and~$N=\bigoplus_{l=1}^r N_l$ be two upset-decomposable modules. The verticalal arrows of this diagram illustrate the~$G(M,\alpha)$ and~$G(N,\alpha)$, and the diagonal arrows illustrate~$\sigma$.

\[\begin{tikzcd}
	{M_1} && {N_1} \\
	{M_2} && {N_2} \\
	{M_3} && {N_3} \\
	{M_4} && {N_4} \\
	{M_5} && {N_5} \\
	{M_6} && {N_6}
	\arrow[from=1-1, to=2-1]
	\arrow[from=1-1, dashed, thick, to=2-3]
	\arrow[from=1-3, to=2-3]
	\arrow[from=2-1, dashed, thick, to=3-3]
	\arrow[from=3-1, to=4-1]
	\arrow[from=3-1, dashed, thick, to=4-3]
	\arrow[from=3-3, to=4-3]
	\arrow[from=4-1, dashed, thick, to=5-3]
	\arrow[from=5-1, dashed, thick, to=6-3]
	\arrow[from=6-1, dashed, thick, to=1-3]
	\arrow[bend right=30, from=5-3, to=1-3]
\end{tikzcd}\]
In this example with~$l_1=1,$ we have~$J_1 = JM_1 = \{1,2\}, \, J'_1 = JM_{\sigma(\{1,2\}} = JM_{\{2,3\}} =\{2,3,4\}, \, J_2 = JM_{\{1,2,3,4\}}=\{1,2,3,4\}, \, J'_2 = JM_{\sigma(\{1,2,3,4\}} = JM_{\{2,3,4,5\}} =\{1,2,3,4,5\}.$ Since~$1 \in J'_2$ we stop our induction. So we get that~$M_1 \subseteq N_1(9\alpha) \subseteq N_1(11\alpha).$
\end{example}

The following example illustrates that the upper bound~$(2r-1)d_P(M,N)$ in Theorem~\ref{theorem_main} can be attained for the bottleneck distance~$d_B$ and the interleaving distance~$d_I.$

\begin{example}\label{ex_2}
Let~$\alpha>0$ and~$r \in \mathbb{N^*}$. And let
$
M = \bigoplus_{i=1}^r M_i \text{ and } 
N = \bigoplus_{i=1}^r N_i,
$ be two~$d$-parameter upset-decomposable modules
with supports
\[
\operatorname{supp} (M_i) = \{(x_1,\dots,x_d) \mid (x_1,\dots,x_d) \ge (2i\alpha,\dots,2i\alpha)\}\]
\[
\operatorname{supp} (N_i) = \{(x_1,\dots,x_d) \mid (x_1,\dots,x_d) \ge \left( (2r+1)\alpha,\dots,(2r+1)\alpha \right)\}.
\]
Then one can check that 
\[
\operatorname{Pru}_{\alpha}(M) \cong N,
\]
so~$\operatorname{Pru}_{\alpha}(M)$ is an~$\infty$-refinement of~$N$. 
Moreover, for every~$\delta>0$, 
\[
\operatorname{Pru}_{\alpha+\delta} (M) \cong \operatorname{Pru}_{\delta} (N) \cong N(-\delta),
\] 
so we also have~$\operatorname{Pru}_{\alpha+\delta}(M)$ as an~$\infty$-refinement of~$N(-\delta)$.
We can also easily check that~$\forall \varepsilon < \alpha, \operatorname{Pru}_{\varepsilon}(M)$ is not an~$\infty$-refinement of~$N$.
Similarly,~$N$ is an~$\infty$-refinement of~$M$, and for every~$\delta>0$,~$\operatorname{Pru}_{\delta}(N)$ is an~$\infty$-refinement of~$\operatorname{Pru}_{\delta}(M)$.\\
From these observations we obtain that
\[
d_P(M,N) = \alpha.
\]
Furthermore, since all~$N_i$ have the same support, it is easy to check that
\[
d_B(M,N) = d_I(M,N) = (2r-1)\alpha,
\]
showing that the upper bound in Theorem~\ref{theorem_main} can indeed be attained ~$\forall \alpha>0$,~$\forall d \in \mathbb{N}^*$ and~$\forall r \in \mathbb{N}^*$. In particular, the constant~$2r-1$ is optimal, and the inequality cannot be strengthened neither for the bottleneck distance~$d_B$ nor for the interleaving distance~$d_I$.

\end{example}

We now state and prove another main result of this paper. This result provides a partial converse to Theorem~\ref{theorem_main}, providing a weaker bound than the left-hand inequality in Conjecture~\ref{conjecture}. Together, these two theorems establish that the pruning distance is Lipschitz equivalent to the bottleneck distance in the class of upset-decomposable modules.

\begin{theorem}\label{theorem_3}
       Let~$M$ and~$N$ be pfd upset-decomposable modules then
       \[d_P(M,N) \leq d_B(M,N).\]
   \end{theorem}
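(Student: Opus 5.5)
The plan is to fix $\varepsilon > d_B(M,N)$ (the case $d_B(M,N)=\infty$ is trivial), show that $\varepsilon$ belongs to the set whose infimum defines $d_P(M,N)$, and then let $\varepsilon \downarrow d_B(M,N)$. Take an $\varepsilon$-matching between $M$ and $N$. A nonzero upset module $Q$ is never $\varepsilon$-interleaved with $0$: the map $Q_{0\to 2\varepsilon}$ is injective by Lemma~\ref{lem_shift_injection}, so it cannot factor through $0$. Hence the matching is a bijection between all summands. After reindexing we may write $M=\bigoplus_l M_l$ and $N=\bigoplus_l N_l$ with $M_l\sim_\varepsilon N_l$ for every $l$.

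For upset modules, an $\varepsilon$-interleaving consists of nonzero maps $M_l\to N_l(\varepsilon)$ and $N_l\to M_l(\varepsilon)$. Their composite is the injective map $M_{0\to2\varepsilon}$, so both maps are nonzero, and Lemma~\ref{lem_upset_map2} then gives
\[
M_l \subseteq N_l(\varepsilon)\quad\text{and}\quad N_l\subseteq M_l(\varepsilon)\quad\text{for all } l.
\]
By symmetry it suffices to show that, for every $\delta\ge 0$, $\operatorname{Pru}_{\varepsilon+\delta}(M)$ is an $\infty$-refinement of $\operatorname{Pru}_\delta(N)$. By Theorem~\ref{theorem_1}, both prunings decompose into the upset summands
\[
\pi_l(\operatorname{Pru}_{\varepsilon+\delta}(M))=\bigcap_{s\in JM_l}M_s(-\varepsilon-\delta)
\quad\text{and}\quad
\pi_l(\operatorname{Pru}_{\delta}(N))=\bigcap_{t\in JN_l}N_t(-\delta).
\]
Here $JM_l$ is computed in $G(M,\varepsilon+\delta)$ and $JN_l$ in $G(N,\delta)$. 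Since a submodule is a subquotient, it is enough to prove the summandwise inclusion
\[
\pi_l(\operatorname{Pru}_{\varepsilon+\delta}(M))\subseteq \pi_l(\operatorname{Pru}_{\delta}(N))\quad\text{for every } l,
\]
using the identity pairing of summands. Empty (zero) summands cause no trouble: they contribute nothing to either decomposition, or one adjusts the count using zero subquotients.

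The key step is a comparison of the two graphs: every edge of $G(N,\delta)$ is an edge of $G(M,\varepsilon+\delta)$. Indeed, if $N_a\subseteq N_b(2\delta)$, then
\[
M_a\subseteq N_a(\varepsilon)\subseteq N_b(2\delta+\varepsilon)\subseteq M_b(2\delta+2\varepsilon).
\]
This chain uses that shifting preserves inclusions of supports, since each $\mathbb{P}_\varepsilon$ is an order-preserving bijection. Consequently every path in $G(N,\delta)$ is a path in $G(M,\varepsilon+\delta)$, so $JN_l\subseteq JM_l$. Moreover, for each $t$ we have $M_t(-\varepsilon-\delta)\subseteq N_t(-\delta)$, obtained by shifting $M_t\subseteq N_t(\varepsilon)$ by $-\varepsilon-\delta$. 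Therefore
\[
\bigcap_{s\in JM_l}M_s(-\varepsilon-\delta)\subseteq\bigcap_{t\in JN_l}M_t(-\varepsilon-\delta)\subseteq\bigcap_{t\in JN_l}N_t(-\delta),
\]
which is the desired inclusion. Swapping the roles of $M$ and $N$ gives the other condition, so $d_P(M,N)\le\varepsilon$.

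The main obstacle I expect is the bookkeeping rather than the inequality itself. Theorem~\ref{theorem_1} is stated for finitely many summands $r$. If the paper allows pfd upset-decomposable modules with infinitely many summands, I would check that its proof goes through verbatim. The formula for $I_i$ only uses reachability along paths of length $\le i$, and $K=0$ still holds by injectivity of the shift maps. The only extra point is that $JM_l$ becomes the union over all path lengths instead of stopping at $r-1$. Failing that, I would restrict to the finite case, which is the setting of Theorem~\ref{theorem_main}.
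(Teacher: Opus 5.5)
Your proposal is correct and follows the same route as the paper. You reindex via the matching so that $M_l \subseteq N_l(\varepsilon)$ and $N_l \subseteq M_l(\varepsilon)$, show every edge of $G(N,\delta)$ is an edge of $G(M,\varepsilon+\delta)$ so that $JN_l \subseteq JM_l$, and conclude the summandwise inclusion from Theorem~\ref{theorem_1}. You also justify two points the paper only asserts: that an $\varepsilon$-matching of upset-decomposable modules is a full bijection forcing these containments, and that summandwise inclusion suffices because submodules are subquotients (the paper instead cites Lemma~\ref{lem_infinit_refinement}, which goes in the opposite direction).
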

\begin{proof}
Let~$r = \operatorname{supdim}$~$M$, if~$\operatorname{supdim}$~$N \neq r$ then both distances~$= \infty$ so nothing to prove.\\
Suppose that~$\operatorname{supdim}$~$N = r$ i.e~$M=\bigoplus_{l=1}^r M_l$ and~$N=\bigoplus_{l=1}^r N_l.$ Let~$\alpha \geq 0$, such that there exists a matching between~$M$ and~$N$. This means that there exists a permutation~$\sigma$ of~$\{1,\dots,r\}$ such that 
\[
M_l \subseteq N_{\sigma(l)}(\alpha) \quad \text{and} \quad N_l \subseteq M_{\sigma^{-1}(l)}(\alpha), \quad \forall l \in \{1,\dots,r\}.
\]
Define~$N' = \bigoplus_{l=1}^r N'_l$ where~$N'_l = N_{\sigma(l)}$, then~$N' \cong N$. We get that
   \begin{equation}
M_l \subseteq N'_l(\alpha) \quad \text{and} \quad N'_l \subseteq M_l(\alpha), \quad \forall l \in \{1,\dots,r\}.
   \tag{4}
\label{eq:4}
\end{equation}
From now on, to simplify the notation,~$N$ refers to~$N'$.\\
We want to show that~$\forall \delta\geq 0, \Pru_{\alpha+\delta}(M) \text{ is an } \infty\text{-refinement of } \Pru_\delta(N)$ and~$\Pru_{\alpha+\delta}(N)$ is an~$\infty\text{-refinement of } \Pru_\delta(M).$ We prove only the first statement; the second follows by symmetry. By Lemma~\ref{lem_infinit_refinement} it is sufficient to show that 
\[
\forall \delta\geq 0, \, \forall l \in \{1,\dots,r\}, \quad \pi_{l}(\operatorname{Pru}_{\alpha+\delta}(M)) \subseteq \pi_{l}(\operatorname{Pru}_{\delta}(N)).
\]
Fix~$l_1 \in \{1,\dots,r\}$ and~$\delta \geq 0$. Let~$l \in JN_{l_1}$ in the directed graph~$G(N,\delta)$, then there exists a path
\[
P:\; N_{l_1}=N_{l'_0}\to N_{l'_1}\to\cdots\to N_{l'_{c-1}}\to N_{l'_{c}}=N_{l},
\]
for some integer~$c$.
By the construction of~$G(N,\delta)$ each arrow gives a containment shifted by~$2\delta$, hence
\[
N_{l'_0} \subseteq N_{l'_1}(2\delta), N_{l'_1} \subseteq N_{l'_2}(2\delta), \cdots , N_{l'_{c-1}} \subseteq N_{l'_{c}}(2\delta).
\]
By~\eqref{eq:4} we get that 
\[
M_{l'_i} \subseteq N_{l'_i}(\alpha) \subseteq N_{l'_{i+1}}(\alpha+2\delta) \subseteq M_{l'_{i+1}}(2\alpha+2\delta), \quad \forall i \in \{0,\dots,c-1\}.
\]
So we get that there exists a path 
\[
P':\; M_{l_1}=M_{l'_0}\to M_{l'_1}\to\cdots\to M_{l'_{c-1}}\to M_{l'_{c}}=M_{l},
\]
in the directed graph~$G(M,\alpha+\delta)$. Therefore~$l \in JM_{l_1}$ and 
\[
JN_{l_1} \subseteq JM_{l_1}.
\]
Therefore, by Theorem~\ref{theorem_1} and~\eqref{eq:4} we obtain
\[\pi_{l_1}(\operatorname{Pru}_{\alpha+\delta}(M)) = \bigcap_{l \in JM_{l_1}} M_l(-\delta)(-\alpha) \subseteq \bigcap_{l \in JN_{l_1}} N_l(-\delta)=\pi_{l_1}(\operatorname{Pru}_\delta(N)).\]
Since~$l_1$ and~$\delta$ were arbitrary,
\[
\forall \delta \geq 0, \forall l \in \{1,\dots,r\}, \quad \pi_l(\operatorname{Pru}_{\alpha+\delta}(M)) \subseteq \pi_l(\operatorname{Pru}_\delta(N)).
\]
Thus~$\forall \delta\geq 0, \Pru_{\alpha+\delta}(M) \text{ is an } \infty\text{-refinement of } \Pru_\delta(N)$, and we conclude that \[d_P(M,N) \leq d_B(M,N).\]
\end{proof}

\clearpage

\section*{Acknowledgments}
The author would like to thank his supervisors, Grégory Ginot and Vadim Lebovici, for their guidance, support, and many helpful discussions throughout the preparation of this work, as well as for their careful reading of earlier drafts and valuable suggestions that significantly improved the exposition.

\bibliography{references}        

\end{document}